\newcommand{\CP}{\mathbb{CP}^2}
\newcommand{\C}{\mathcal{C}}
\newcommand{\e}{\varepsilon}
\newcommand{\z}{\zeta}
\definecolor{greenbean}{RGB}{199,237,204}
\newtheorem{thm}{Theorem}[section]
\newtheorem{lemma}[thm]{Lemma}
\newtheorem{prop}[thm]{Proposition}
\newtheorem{cor}[thm]{Corollary}
\theoremstyle{definition}
\newtheorem{Def}[thm]{Definition}
\newtheorem{eg}{Example}[section]
\newtheorem{theorem}{Theorem}[section]
\newtheorem{question}[theorem]{Question}
\theoremstyle{definition}
\newtheorem{rmk}[thm]{Remark}
\begin{document}
\title{Fundamental groups of highly symmetrical curves and Fermat line arrangments}

\footnotetext{Email address: Meirav Amram: meiravt@sce.ac.il, Praveen Roy: praveen.roy1991@gmail.com, Uriel Sinichkin: sinichkin@mail.tau.ac.il \\2020 Mathematics Subject Classification. 14D05, 14H30, 14N20. \\	{\bf Key words}: Fermat line arrangements,  fundamental group, monodromy, Van Kampen Theorem.}


\author[1]{Meirav Amram}
\author[2]{Praveen Kumar Roy}
\author[3]{Uriel Sinichkin}

\affil[1]{\small{Department of Mathematics, Shamoon College of Engineering, Ashdod, Israel}}
\affil[2]{\small{UM-DAE Centre for excellence in basic sciences, Mumbai, India}}
\affil[3]{\small{School of Mathematical Sciences, Tel Aviv University, Tel Aviv, Israel}}
	
	\date{}
	
	\maketitle

\maketitle
\abstract{ We showcase a computation of the fundamental group of $\mathbb{CP}^2 - \mathcal{C}$ when $\mathcal{C}$ is a curve admitting a lot of symmetries.
In particular, let $\mathcal{C}$ denote the Fermat line arrangement in $\mathbb{CP}^2$ defined by the vanishing 
locus of homogeneous polynomial $(x^n-y^n)(y^n-z^n)(z^n-x^n)$. In this article, we compute the fundamental group $\pi_1(\mathbb{CP}^2-\mathcal{C})$ of complement of this line arrangement in the complex projective plane. We show that this group is semi-direct product of $G$ and $F_n$, i.e., $\pi_1(\mathbb{CP}^2-\C, \overline{\epsilon}) = G \rtimes F_{n}$, where $G$ and $F_n$ is defined in \ref{defn of G}, and \ref{final theorem} respectively. 
}

\tableofcontents

\section{Introduction}

The complement of line arrangements in the projective plane, or more generally, the complement of hyperplane arrangements in an n-dimensional projective space, has been a focal point of interest for researchers working in the areas of topology and geometry since its inception. As remarked by Hirzebruch in \cite{Hir}, ``The topology of the complement of an arrangement of lines in the projective plane is very interesting, and the investigation of the fundamental group of the complement is very difficult." Indeed, computing these fundamental groups is often a challenging task.

Beginning with a relatively simple example, such as the complement of a collection of lines in the real plane, understanding its topology can be achieved by merely counting its connected components. However, when considering a set of lines in the complex plane, the complement becomes connected, making its fundamental group more intriguing. The complexity of these fundamental groups increases significantly, as seen in the case of $\mathbb{C}^l - \cup_{i=1}^{n}H_i$ (where $\{H_1, H_2,…, H_n \}$ are hyperplane arrangement). Nevertheless, there are methods to tackle these challenges. 

Given an arrangement $\mathcal{A} = \{ H_1, H_2,…, H_n \} \subset \mathbb{C}^3$ there corresponds an arrangement in $\overline{\mathcal{A}}  \subset \mathbb{CP}^2$. In \cite{Hir} Hirzebruch defined a compact, smooth, complex algebraic surface $M:= M_n(\mathcal{A})$ as the minimal desingularization of certain branched cover of $\mathbb{CP}^2$ (see \cite{Hir}). The author then proceeds to compute the Chern numbers of $M$.

Fermat arrangements, also known as Ceva arrangements, were first explored by F. Hirzebruch in his pursuit of finding a minimal algebraic surface of general type with Chern numbers satisfying $c_1^2/c_2 \leq 3$ (see \cite{Hir}). The name ``Fermat line arrangement" comes probably from the fact that this arrangement is realized as singular locus of the pencil generated by $(x^n-y^n)$ and $(y^n-z^n)$ whose non-singular members are isomorphic to Fermat curve $x^n + y^n + z^n =0$ \cite{Urzua}. 
In the theory of algebraic surfaces, this line arrangement garnered significant attention due to its relation to the containment problem. More precisely, the ideal of points (in $\mathbb{CP}^2$), dual to the Fermat line arrangement, gives a counter example to the following question
\begin{question}(Huneke)
Let $I$ be an ideal of points in $\mathbb{P}^2$. Is there a containment 
\[
I^{(3)} \subseteq I^2 ?
\]
\end{question}
Here $I^{(3)}$ denotes the $3$rd symbolic power of the ideal $I \subset R:= \mathbb{C}[x,y,z]$, defined by 
\[
I^{(3)} := R \cap \left(\bigcap_{p \in Ass(I)} I^{3} R_p\right).
\]
This counter-example was found by Dumnicki, Szemberg and Tutaj-Gasi\'nska in \cite{DST}, and it is the first counter-example appearing in the literature. This arrangement is further generalized to Fermat-type arrangements of hyperplanes in higher-dimensional projective spaces, and it serves as an example of non-containment problems in general. 

On another note, Fermat-type arrangements are connected with what is known as the ``Bounded Negativity Conjecture (BNC)". BNC predicts that the self intersection of reduced and irreducible curves on a surface is bounded from below \cite{BNC-1}. Later, in \cite{BNC-2}, the authors established a link between the bounded negativity conjecture and Fermat-type arrangements. They also demonstrate that Fermat-type arrangements provide examples of extremal Harbourne constants. 

Furthermore, Fermat line arrangements hold special significance in the theory of multi-points (corresponding to singular points of line arrangements) Seshadri constants \cite{Pokora}.

We return now to the main motivation of this paper, which is the study of fundamental groups of complements of line arrangements. 
Before we focus on our methods and explanations, we mention that there are results on monodromy and fundamental groups. Some of these results can be found in \cite{Quadric-L-A,cog1,Garber-Teicher,OS,Randell,Sa}. 
All of those results are based on the foundational work of Van Kampen \cite{vk1}.
Those techniques are using the well developed theory of braids (\cite{Cohen-Sucio,Orlik-Ter}), which, in turn, is connected to the theory of knots and links in 3-space (\cite{Falk-Rand}). We mention works \cite{Art4} and \cite{cog1} in addition to \cite{Cohen-Sucio} as foundational references for understanding the study of monodromy and fundamental groups, highlighting the strong correspondence between them.

The fundamental group of the complement of a line arrangement has primarily been studied using the theory developed by Moishezon and Teicher in a series of papers \cite{Moish-Teicher-I,Moish-Teicher-II}.  Unfortunately, those results are not well suited for the study of the fundamental groups of Fermat's arrangements, for two reasons.
The main reason lies in the fact that the computation uses a generic projection $ \mathbb{C}^2 \to \mathbb{C}$ which is injective on the set of singular points of the arrangement.
If one wishes to perform such a computation, the number of singular points will increase significantly as the degree of the arrangement increases, with no simple description of their relative position after the projection.
After a preliminary investigation, it appears that the computation becomes very hard even in low degrees, and the resulting representation of the fundamental group lacks useful symmetries which might help its simplification.
Another problem in applying the results of Moishezon and Teicher lies in the fact that they perform the computation for a real algebraic curve, all of which singularities are real, which is not true for Fermat's arrangements with $n\ge 3$.
This issue could be managed, for example by using Van Kampen's original work, but it adds an additional layer of difficulty.

There are independent studies on this topic, such as the work presented in \cite{Fan1}, where the author proves
\begin{prop}[Fan]
    Let $\Sigma = \cup l_i$ be a line arrangement in $\mathbb{CP}^2$ and assume that there is a line $L$ of $\Sigma$ such that for any singular point $S$ of $\Sigma$ with multiplicity $\geq 3$, we have $S \in L$. Then $\pi_1(\mathbb{CP}^2 -\Sigma)$ is isomorphic to a direct product of free groups.
\end{prop}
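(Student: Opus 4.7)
The plan is to project from a generic point on $L$ so that every critical value of the projection comes from a node off $L$; a Zariski--van Kampen analysis then presents the fundamental group as a right-angled Artin group whose defining graph is complete multipartite, and hence splits as a direct product of free groups.

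\textbf{Step 1 (choose and analyse the projection).} Fix a point $P \in L$ that lies on no other line of $\Sigma$ and that is generic in the sense that the central projection $\pi: \mathbb{CP}^2 \setminus \{P\} \to \mathbb{CP}^1$ from $P$ separates all of the finitely many singular points of $\Sigma$ off $L$. The line $L$ itself is the fiber of $\pi$ over one point $t_L \in \mathbb{CP}^1$, and each remaining line $\ell_i \in \Sigma$ is a section of $\pi$; restricting we obtain $\pi: \mathbb{CP}^2 \setminus \Sigma \to \mathbb{CP}^1 \setminus \{t_L\} \cong \mathbb{C}$, whose generic fiber is a copy of $\mathbb{CP}^1$ punctured at the $n$ points $P$ and $\ell_t \cap \ell_i$ for $i = 1, \ldots, n-1$. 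A critical value arises exactly when two of these punctures collide, i.e.\ when $\ell_t$ passes through some singular point of $\Sigma$. For $t \ne t_L$ the line $\ell_t$ already meets $L$ in the single point $P$, so $\ell_t$ cannot pass through any singular point of $\Sigma$ lying on $L$; every critical value therefore corresponds to a singular point of $\Sigma$ off $L$, which by hypothesis has multiplicity $2$, i.e.\ is a node.

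\textbf{Step 2 (Zariski--van Kampen).} With meridians $m_1, \ldots, m_{n-1}$ around the punctures in a reference fiber as generators of $\pi_1(\mathbb{CP}^2 \setminus \Sigma)$, the Zariski--van Kampen theorem yields one set of relations per critical value from the local braid monodromy. At a node the local monodromy is a full twist on the two strands corresponding to the two meeting lines, which produces precisely the commutator relation $[m_i, m_j] = 1$ and fixes all other generators. Hence $\pi_1(\mathbb{CP}^2 \setminus \Sigma)$ is the right-angled Artin group $A_\Gamma$ of the graph $\Gamma$ with vertex set $\{1,\ldots,n-1\}$ and edges $\{\{i,j\} : \ell_i \cap \ell_j \notin L\}$.

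\textbf{Step 3 (recognise $\Gamma$ as complete multipartite).} Two lines $\ell_i, \ell_j$ meet outside $L$ if and only if their intersections with $L$ are distinct, so partitioning the vertices by their image on $L$ via $V_Q := \{i : \ell_i \cap L = Q\}$ for $Q \in L$ writes $\Gamma$ as the complete multipartite graph on the parts $V_Q$. The right-angled Artin group of such a graph is by inspection the direct product $\prod_Q F_{|V_Q|}$ of free groups, which yields the claimed decomposition of $\pi_1(\mathbb{CP}^2 \setminus \Sigma)$.

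The main obstacle is the Zariski--van Kampen bookkeeping in Step 2: one must pin down, using an ordered system of meridians compatible with the braid monodromy along a good base path, that the relation attached to a node really is $[m_i, m_j] = 1$ in terms of the original generators rather than in terms of braid-monodromy-conjugated versions of them. The nodal hypothesis makes this manageable because a conjugated commutator is equivalent to the original one modulo conjugation, so the underlying right-angled Artin group is unambiguous; this is the specific feature of the assumption that all multiple points of $\Sigma$ lie on $L$, and it is precisely what fails when higher multiplicity singularities are allowed off $L$.
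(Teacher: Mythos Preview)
The paper does not prove this proposition; it is quoted from Fan \cite{Fan1} as background, so there is no in-paper proof to compare against.

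Your Steps 1 and 3 are correct, but Step 2 has a genuine gap that you flag without actually closing. The braid monodromy at a node $\ell_i\cap\ell_j$ yields a relation $[\mu_i,\mu_j]=1$ where $\mu_i=w\,m_i\,w^{-1}$ and $\mu_j=w'\,m_j\,w'^{-1}$ for conjugating words that are in general \emph{different}; your sentence ``a conjugated commutator is equivalent to the original one modulo conjugation'' would justify the step only when $w=w'$, and in a free group the relation $[w\,m_i\,w^{-1},\,w'\,m_j\,w'^{-1}]=1$ is not equivalent to $[m_i,m_j]=1$. What your argument does establish is a well-defined surjection from the Zariski--van Kampen group onto $A_\Gamma=\prod_Q F_{|V_Q|}$ (in a direct product of free groups every conjugate of $m_i$ lies in its own free factor and therefore commutes with every conjugate of $m_j$ coming from a different factor), but injectivity --- i.e.\ that no \emph{extra} relations are forced --- is exactly what remains to be shown.

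A clean fix, fully compatible with your setup, is to take $L$ as the line at infinity, so that $\mathbb{CP}^2\setminus\Sigma=\mathbb{C}^2\setminus\Sigma'$ with $\Sigma'$ an affine arrangement whose parallel classes are exactly your parts $V_Q$, and then invoke the Oka--Sakamoto product theorem \cite{OS}: each class $C_s$ meets each other class $C_t$ in $|C_s|\cdot|C_t|$ distinct transversal points (this is precisely the hypothesis that all singularities off $L$ are nodes), whence $\pi_1(\mathbb{C}^2\setminus\Sigma')=\prod_s\pi_1(\mathbb{C}^2\setminus C_s)=\prod_s F_{|C_s|}$. This bypasses the braid-monodromy bookkeeping entirely and gives the direct-product decomposition at once.
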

In \cite{Fan2}, the author conducted similar studies and established the following:
\begin{prop}[Fan]
    Let $\Sigma$ be an arrangement of $n$ lines and $S = \{a_1,...,a_k\}$ be the set of all singularities of $\Sigma$ with multiplicity $\geq 3$. Suppose that $\beta(\Sigma) = 0$, where $\beta(\Sigma)$ is the first Betti number of the subgraph of $\Sigma$ which contains only the higher singularities (i.e. with multiplicity $\geq 3$) and their edges. Then:
    \[
    \pi_1(\mathbb{CP}^2 -\Sigma) = \mathbb{Z}^r \oplus \mathbb{F}^{m(a_1) - 1} \oplus \cdots \oplus \mathbb{F}^{m(a_k) - 1} 
    \]
    where $r=n+k-1-m(a_1)- ... - m(a_k)$.
\end{prop}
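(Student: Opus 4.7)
The plan is to apply the Zariski--Van Kampen theorem to obtain a finite presentation of $\pi_1(\mathbb{CP}^2 \setminus \Sigma)$ and then exploit the forest structure encoded by $\beta(\Sigma)=0$ to factor this group as the claimed direct sum. Fix a line $L_\infty$ in general position with $\Sigma$, pass to the affine chart $\mathbb{C}^2 = \mathbb{CP}^2 \setminus L_\infty$, and choose a generic linear projection $\mathbb{C}^2 \to \mathbb{C}$. Van Kampen's theorem then produces a presentation with meridian generators $\gamma_1,\ldots,\gamma_n$ (one per line of $\Sigma$), commutation relations $[\gamma_i,\gamma_{i'}]=1$ at each node $l_i \cap l_{i'}$, a local block of braid-monodromy relations $R_{a_j}$ at each higher singularity $a_j$, and the global projective relation $\gamma_1 \cdots \gamma_n = 1$.

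Next I would unpack the local picture at each higher singularity. A point $a_j$ of multiplicity $m := m(a_j)$ has a small tubular neighbourhood whose intersection with $\mathbb{CP}^2 \setminus \Sigma$ is homotopy equivalent to the complement of $m$ concurrent lines in $\mathbb{C}^2$, which retracts onto a bouquet of circles. The braid monodromy around $a_j$ then shows that the $m$ meridians of the lines through $a_j$ generate a free group $F_{m-1}$ of rank $m-1$ (the last meridian being the inverse of an ordered product of the others, up to a central element killed by the global relation). This is the local factor we aim to split off for each $a_j$.

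To assemble these local factors into a direct product I would induct on $k$, processing the forest leaf first. Since $\beta(\Sigma)=0$, the graph $\Gamma$ with vertex set $\{a_1,\ldots,a_k\}$ and an edge $\{a_j,a_{j'}\}$ whenever some line of $\Sigma$ contains both $a_j$ and $a_{j'}$ is a forest, and hence has a leaf $a_j$. At most one line $\ell$ through $a_j$ meets any other higher singularity, so the remaining $m(a_j)-1$ meridians through $a_j$ meet the rest of $\Sigma$ only at nodes; the corresponding node relations force these meridians to commute with every generator not attached to $a_j$. Consequently the subgroup they generate is exactly $F_{m(a_j)-1}$, it commutes with the rest and intersects it trivially, yielding a direct factor. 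Stripping it off reduces to an arrangement satisfying the same hypotheses with one fewer higher singularity. The base case $k=0$ is a nodal arrangement, whose complement is classically known to have abelian fundamental group $\mathbb{Z}^{n-1}$, matching $r=n-1$. A dimension count $n-1 = r + \sum_j (m(a_j)-1)$ then confirms the formula $r = n+k-1-\sum_j m(a_j)$.

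The main obstacle is the commutation claim in the inductive step, namely showing rigorously that the ``extra'' meridians at the leaf $a_j$ commute with every generator attached to the rest of the forest. The forest hypothesis is exactly what makes this work: a cycle among higher singularities would produce non-trivial conjugation along the cycle via Hurwitz moves in the braid monodromy, entangling the local free factors and obstructing any direct product decomposition. Carefully tracking the braid monodromy along paths in $\Gamma$ and using the node commutators to trivialise the resulting conjugations is the technical heart of the argument.
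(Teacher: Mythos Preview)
This proposition is not proved in the paper at all: it is a result of Fan, quoted from \cite{Fan2} in the introduction purely as background. The authors cite it only to point out that Fan's hypothesis \emph{fails} for Fermat arrangements (the three multiple points are pairwise joined by lines, so $\beta(\Sigma)\ne 0$), which is what motivates their own, entirely separate computation. There is therefore no proof in the paper against which to compare your proposal.

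As a standalone outline of Fan's argument your plan is in the right spirit --- Zariski--Van Kampen together with an induction peeling off leaves of the forest of higher singularities is essentially the standard route. Two places would need tightening if you were to carry it out. First, a leaf $a_j$ may have \emph{no} line through it meeting another higher singularity (an isolated vertex of the forest), so the inductive step should treat that case too. Second, and more substantially, your key commutation claim is not just about node relations: the $m(a_j)-1$ ``extra'' meridians at the leaf certainly commute with all meridians not through $a_j$ by the node relations, but you also need them to interact correctly with the meridian of the connecting line $\ell$, and that comes only from the local relation at $a_j$ (which says the cyclic product of the $m(a_j)$ meridians is central among them, not that they pairwise commute). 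Showing that this is enough to split off a genuine direct $F_{m(a_j)-1}$ factor --- with $\ell$'s meridian landing in the remaining piece and eventually in the abelian part --- is exactly the delicate point you flag at the end. None of this, however, bears on the present paper, which simply quotes Fan's theorem and moves on.
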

Those results are also not applicable for Fermat's arrangement, since there are $3$ singular points of multiplicity $\ge 3$, all of them connected to each other by a line.

It is thus desirable to develop a method for the computation of the fundamental groups of line arrangements, which is better suited to account for the various symmetries and nice structure Fermat's arrangements possess, which is the main result of the current work.
The main idea is to replace the generic projection which is injective on the set of singular points of the arrangement, by a carefully chosen projection which is not only non-injective on the singular points, but even parallel to some of the lines in the arrangement.
A key step in our technique involves identifying a subspace of the complement of the arrangement and defining a projection from that space that forms a fibration (see \ref{crucial_lemma}). This process is repeated multiple times in the paper. Additionally, we make repeated use of the Seifert-Van Kampen Theorem \cite{vk2} to compute the fundamental group.

To begin, we modify the line arrangement so that the projection we define from the complement of this modified arrangement to $\mathbb{C}-\{n+1 \; \text{points}\}$ becomes a fibration. Next, we compute the fundamental group of the modified space by analyzing the monodromy action induced by the fundamental group of the base space on the fiber space of the projection. Some of these actions are quite intricate for general $n$ and require careful analysis.

Subsequently, we define two open sets of $\mathbb{CP}^2$ in such a way that the space of interest becomes the union of the modified space and these two defined open sets. The remainder of our computation is divided into two sections, each dealing with the process of attaching the open sets to the modified space consecutively and computing the fundamental group of the resulting space using a new projection that we define, along with the Seifert-Van Kampen Theorem.

Finally, we employ the Seifert-Van Kampen Theorem once again to compute the fundamental group that is of interest to us.

The result of the computation of the fundamental group is as follows, where $G$ is defined in \ref{defn of G}, and $F_n$ is the free group on $n$ generators.
\begin{theorem}\label{final theorem}
    The fundamental group of the complement of Fermat line arrangement is semi-direct product of $G$ and $F_n$ i.e.,
    \[
    \pi_1(\mathbb{CP}^2-\C, \overline{\epsilon}) = G \rtimes F_{n}.
    \]
\end{theorem}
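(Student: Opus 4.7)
The plan is to follow the four-step strategy sketched in the introduction, exploiting the $\mathbb{Z}/n$-symmetry of $\mathcal{C}$ by working with a projection that is far from generic: its fibers are chosen to be parallel to one family of the Fermat lines, say the lines $z = \zeta^k x$ with $\zeta$ a primitive $n$-th root of unity. Concretely, I would fix an affine chart of $\mathbb{CP}^2$ and a linear projection $p : \mathbb{C}^2 \to \mathbb{C}$ whose fibers are these parallel lines; under $p$, the remaining $2n$ Fermat lines project onto only $n+1$ distinct points of $\mathbb{C}$, because of the cyclic symmetry. This drastic reduction in the number of critical values is what ultimately produces a clean semidirect-product description, rather than the much larger presentations one would obtain from the classical Moishezon--Teicher setup.

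The first technical step is to modify $\mathcal{C}$ (by removing or adding a small number of lines) to produce an auxiliary arrangement $\mathcal{C}'$ for which $p$ restricts to a locally trivial fibration on $\mathbb{CP}^2 - \mathcal{C}'$; this is the role of the lemma referenced as \ref{crucial_lemma}. The generic fiber is then a complex line punctured at its intersections with $\mathcal{C}'$, whose fundamental group is free of an explicit rank. The long exact homotopy sequence of the fibration reads
\[
1 \longrightarrow \pi_1(\text{fiber}) \longrightarrow \pi_1(\mathbb{CP}^2 - \mathcal{C}') \longrightarrow F_{n+1} \longrightarrow 1,
\]
and it splits because $F_{n+1}$ is free. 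It remains to compute the monodromy action of $F_{n+1}$ on the free group of the fiber by analysing the local braid monodromy around each of the $n+1$ critical values in the base. Three of these values correspond to the high-multiplicity points of $\mathcal{C}$ (of multiplicity $n$ each) while the rest correspond to simpler singular configurations, and I expect each to produce a braid of cyclic form that is easy to write down thanks to the symmetric choice of $p$.

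Finally, I would recover $\pi_1(\mathbb{CP}^2 - \mathcal{C})$ from $\pi_1(\mathbb{CP}^2 - \mathcal{C}')$ in two stages by reattaching the lines that were removed in the modification. At each stage one covers the intermediate space by the previously computed space together with a tubular neighbourhood of the reattached line, and applies the Seifert--Van Kampen theorem; the new relations are exactly those that kill (or identify) the meridian loops encircling the reattached line. Carrying out this procedure and collecting the surviving relations should yield the presentation $G \rtimes F_n$, with $G$ as in \ref{defn of G} and the $F_n$ factor generated by meridians around the $n$ lines of one of the three symmetric families. The main obstacle I anticipate is bookkeeping: correctly tracking how the monodromy relations interact with the Van Kampen relations imposed at each reattachment, and verifying that the resulting extension genuinely splits as a semidirect product rather than a more general extension with the same quotient $F_n$. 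Freeness of $F_n$ guarantees a set-theoretic splitting, but upgrading it to a homomorphism requires exhibiting an explicit family of group-theoretic lifts compatible with the computed monodromy, and this is where the symmetry of the Fermat arrangement will play its decisive role.
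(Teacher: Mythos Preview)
Your outline is essentially the paper's strategy: the projection $[x:y:z]\mapsto y/x$ parallel to the family $L_{z,k}$, the auxiliary arrangement $\mathcal{C}'=\mathcal{C}\cup V(x)\cup V(y)$ making this a fibration over $\mathbb{C}\setminus\{n+1\text{ points}\}$ with fiber $\mathbb{C}\setminus\{2n\text{ points}\}$, the monodromy computation, and the two Seifert--Van Kampen reattachments via the neighbourhoods $U_0$ and $U_\infty$. Two minor corrections are in order: only one of the $n+1$ base punctures (namely $0$) sits under a multiple point of $\mathcal{C}$, the remaining $n$ being the values at which the fiber coincides with one of the arrangement lines $L_{z,k}$; and your concern about the splitting is misplaced, since any surjection onto a free group admits a homomorphic section by the universal property, so once the presentation of Proposition~\ref{prop:main} is established the semidirect-product structure is immediate.
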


It is worth noting here that although fundamental groups of complements of curves are used frequently in constructing Zariski pairs \cite{Art1,Art2,Art3}, Fermat arrangement is not a part of a Zariski pair.
Indeed, If any line arrangement has the same combinatorics, we can use a projective transformation to put the multiple nodes at the intersections of the coordinate axes, and then use a continuous deformation to put all the double nodes at the positions they occupy in the Fermat's arrangement.

The organization of paper is as follows: In Section \ref{method}, we provide a detailed explanation of the computation method. More specifically, in Subsection \ref{Notations and setting} we introduce the necessary definitions and notations that will be used throughout the paper and in Subsection \ref{computational-method}, we write down the whole computational method concisely. Additionally, in Subsection \ref{Notations and setting}, we define two open sets, $U_0$ and $U_\infty$, and proceed to compute their fundamental groups in Subsections \ref{fundamental group of U0} and \ref{Fundamental group of Uinfinity}, respectively. Finally, we compute the fundamental group in Section \ref{final computation}.

\section*{Acknowledgements}
The second named author was supported by the postdoctoral fellowship of SCE during a part of this work.

\section{Definitions and description of the computational method}\label{method}

\subsection{Notations and setting}\label{Notations and setting}

We will work in the projective plane with coordinates $x,y,z$, and denote by $V(f)$ the vanishing locus in $\mathbb{CP}^2$ of a homogeneous polynomial $ f\in \mathbb{C}[x,y,z] $.
The $n$'th Fermat's arrangement, which we denote by $\mathcal{C}$, is defined to be  
\[
V\left( (x^n-y^n)(y^n-z^n)(z^n-x^n) \right) \subset \mathbb{CP}^2.
\]
We will compute the fundamental group of $\mathbb{CP}^2 - \mathcal{C}$ with respect to the basepoint $\overline{\epsilon} = [1:\varepsilon:0] \in \mathbb{CP}^2$.

The curve $\mathcal{C}$ is a union of $3n$ lines: 
\[ \mathcal{C} = \bigcup_{k=0}^{n-1} L_{x,k}  \cup  \bigcup_{k=0}^{n-1} L_{y,k} \cup \bigcup_{k=0}^{n-1}L_{z,k}, \]
where we denote $L_{x,k} := V(y - \zeta_n^k z)$, $L_{y,k} := V(z - \zeta_n^k x)$, and $L_{z,k} := V(x - \zeta_n^k y)$, and $\zeta_n=\exp\left(\frac{2\pi i}{n} \right) $ is a primitive root of unity.
Also, let 
\begin{align*}
	q_z := \bigcap_{k=0}^{n-1}L_{z,k} = [0:0:1] ; \quad q_x := \bigcap_{k=0}^{n-1}L_{x,k} = [1:0:0]; \cr
	\text{and} \quad  q_y := \bigcap_{k=0}^{n-1}L_{y,k} = [0:1:0].
\end{align*}

The main idea of the computation will be to define the projection 
\begin{align}\label{original_proj}
Pr : \mathbb{CP}^2-(\mathcal{C}\cup V(x) \cup V(y)) & \longrightarrow \mathbb{C}-\{0,1,\zeta_n, \zeta_n^2,...,\zeta_n^{n-1}\}\\
Pr ([x:y:z]) & \longmapsto y/x. \nonumber
\end{align}
It would be easy to compute the fundamental group of $\mathbb{CP}^2 -(\mathcal{C}\cup V(x)\cup V(y))$ by using the fact $Pr$ is a fibration (see Corollary \ref{cor:fundamental_group_of_U0_intersect_U_infty}).

To compute the fundamental group of the full space of interest, we introduce two open sets $U_0$ and $U_{\infty}$ containing $V(y)$ and $V(x)$, respectively, as follows: 
\begin{eqnarray*}
	U_0 &:=& \{[x:y:z] \in \CP-\C \ | \ x\ne 0, \; \left|\frac{y}{x}\right| < 2\e \}, \ \text{and} \\
	U_\infty &:=& \{[x:y:z] \in \CP-\C \ | \ y\ne 0, \;  \left|\frac{x}{y}\right| < 2\e \}.
\end{eqnarray*}
Note that if we attach $U_0$ and $U_\infty$ to $\mathbb{CP}^2-(\mathcal{C}\cup V(x) \cup V(y)$ we get the desired space, i.e., 
\[
\mathbb{CP}^2-\mathcal{C} = (\mathbb{CP}^2-(\mathcal{C}\cup V(x) \cup V(y))) \cup U_0 \cup U_{\infty}.
\]

Finally, we denote by $``."$ to be the group action and by $``\cdot"$ to be the product.

\subsection{The computational method}\label{computational-method}

We begin by refining our line arrangement by adjoining $V(x)$ and $V(y)$ into it. 
This enables us to construct the fibration $Pr$ defined in \eqref{original_proj}.
The fact that $Pr$ is a fibration allows us to easily compute the fundamental group of $ \mathbb{CP}^2 - (\mathcal{C} \cup V(x) \cup V(y))$ by using the following lemma. 
\begin{lemma}\label{crucial_lemma}
Let $p : X \to B$ be a fibration of path-connected topological spaces $X$ and $B$ and let $b \in B$ be a fixed point. Let $F$ denotes the fiber $p^{-1}(b)$ with the inclusion map $in:F\to X$. Suppose that $F$ is path-connected and that $\pi_2(B)$ is trivial. Then the following is a short exact sequence of groups
\begin{equation}\label{eq:short_exact_sequance_pi_1}
1\longrightarrow \pi_1(F,x) \xlongrightarrow{in_*} \pi_1(X,x) \xlongrightarrow{p_*} \pi_1(B,b) \longrightarrow 1,
\end{equation}
for $x\in F$. Moreover, if $p$ has a section then 
\[
\pi_1(X,x) = \pi_1(B,b) \ltimes \pi_1(F,x).
\]
\end{lemma}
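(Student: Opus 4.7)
The plan is to deduce both statements from the long exact sequence of homotopy groups associated to the fibration $p\colon X\to B$, which is standard once one has the homotopy lifting property. Choose the basepoint $x\in F\subset X$, and $b=p(x)\in B$. The long exact sequence takes the form
\[
\cdots \longrightarrow \pi_2(B,b) \longrightarrow \pi_1(F,x) \xlongrightarrow{in_*} \pi_1(X,x) \xlongrightarrow{p_*} \pi_1(B,b) \longrightarrow \pi_0(F) \longrightarrow \pi_0(X).
\]

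First, I would observe that path-connectedness of $F$ and $X$ makes $\pi_0(F)$ and $\pi_0(X)$ trivial pointed sets, so the map $p_*\colon \pi_1(X,x)\to \pi_1(B,b)$ is surjective. Next, the assumption $\pi_2(B,b)=1$ makes the connecting map $\pi_2(B,b)\to \pi_1(F,x)$ trivial, so exactness at $\pi_1(F,x)$ forces $in_*$ to be injective. Combined with exactness at $\pi_1(X,x)$, which gives $\ker p_* = \mathrm{im}\, in_*$, this produces exactly the short exact sequence \eqref{eq:short_exact_sequance_pi_1}.

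For the second claim, suppose $p$ admits a section $s\colon B\to X$, i.e.\ a continuous map with $p\circ s = \mathrm{id}_B$. Choosing the basepoint so that $s(b)=x$ (or adjusting by a path), functoriality of $\pi_1$ gives $p_*\circ s_* = \mathrm{id}_{\pi_1(B,b)}$, so $s_*\colon\pi_1(B,b)\to\pi_1(X,x)$ is a set-theoretic splitting of $p_*$ that is also a group homomorphism. By the standard splitting lemma for short exact sequences of (not necessarily abelian) groups, this identifies $\pi_1(X,x)$ with the semidirect product $\pi_1(B,b)\ltimes \pi_1(F,x)$, where the action of $\pi_1(B,b)$ on $\pi_1(F,x)$ is conjugation by $s_*$ inside $\pi_1(X,x)$ (equivalently, the monodromy action of loops in $B$ on the fiber).

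There is no real obstacle here; the only substantive input is the existence of the long exact sequence of a fibration, which is a classical consequence of the homotopy lifting property and may simply be cited. The one small care point is the choice of basepoints: to make $s_*$ land in $\pi_1(X,x)$ rather than $\pi_1(X,s(b))$, one either picks the section so that $s(b)=x$, or conjugates by a chosen path from $s(b)$ to $x$ in $F$ (available since $F$ is path-connected), which does not affect the resulting semidirect-product structure up to isomorphism.
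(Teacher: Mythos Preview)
Your proof is correct and follows essentially the same approach as the paper: invoke the long exact sequence of homotopy groups of the fibration, use $\pi_2(B)=1$ and $\pi_0(F)=1$ to truncate it to the desired short exact sequence, and then split it via the section to obtain the semidirect product. The paper's version is terser (it simply cites Hatcher for the long exact sequence and omits the basepoint discussion), but the argument is the same.
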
 

\begin{proof}
	We have the well known long exact sequence in homotopy groups (see for example \cite[Theorem 4.41]{AH})
	\[ \ldots \longrightarrow \pi_n(F, x) \xlongrightarrow{in_*} \pi_n(X, x) \xrightarrow{p_*} \pi_n(B, b) \longrightarrow \pi_{n-1}(F, x) \longrightarrow \ldots \longrightarrow \pi_0(B, b) \longrightarrow 1. \]
	By our assumptions $ \pi_2(B)=1$ and $\pi_0(F) = 1$, so we get the short exact sequence \eqref{eq:short_exact_sequance_pi_1}.
	If $p$ has a section, it induces a section of $p_*$, so \eqref{eq:short_exact_sequance_pi_1} splits and thus $ \pi_1(X, x) = \pi_1(B, b) \ltimes \pi_1(F, x)$.
\end{proof}

\begin{cor}\label{cor:fundamental_group_of_U0_intersect_U_infty}
	The fundamental group of $\mathbb{CP}^2 - (\mathcal{C}\cup V(x) \cup V(y))$ is
	\begin{equation}\label{Fundamental group of $U$}
		\pi_1(\mathbb{CP}^2-(\mathcal{C}\cup V(x) \cup V(y)), \overline{\epsilon}) = \mathcal{F}_{n+1} \ltimes \mathcal{F}_{2n}.
	\end{equation}
\end{cor}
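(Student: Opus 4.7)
The plan is to apply Lemma \ref{crucial_lemma} to the projection $Pr$ defined in \eqref{original_proj}. I would first pass to the affine chart $x\neq 0$ with coordinates $u=y/x$ and $v=z/x$, in which $Pr$ becomes the linear projection $(u,v)\mapsto u$. Rewriting the components of $\mathcal{C}$ in this chart gives
\[
L_{x,k}=\{v=\zeta_n^{-k}u\},\qquad L_{y,k}=\{v=\zeta_n^{k}\},\qquad L_{z,k}=\{u=\zeta_n^{-k}\},
\]
while $V(y)$ becomes $\{u=0\}$. The vertical lines $L_{z,k}$ together with $V(y)$ are exactly what must be removed from the base, so the base of $Pr$ is $\mathbb{C}$ minus the $n+1$ points $\{0,1,\zeta_n,\ldots,\zeta_n^{n-1}\}$. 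In particular it is path-connected with $\pi_1\cong F_{n+1}$ and is a $K(F_{n+1},1)$, so $\pi_2=1$, matching the hypothesis of Lemma \ref{crucial_lemma}.

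Next I would identify the fiber. For a fixed $u$ in the base, $Pr^{-1}(u)$ is obtained from $\{u\}\times\mathbb{C}$ by deleting the $n$ points $\{\zeta_n^{-k}u\}_{k=0}^{n-1}$ contributed by the $L_{x,k}$ and the $n$ points $\{\zeta_n^{k}\}_{k=0}^{n-1}$ contributed by the $L_{y,k}$. A key observation is that $\zeta_n^{-k}u=\zeta_n^{k'}$ would force $u$ to be an $n$-th root of unity, and such $u$ are precisely those excluded from the base; hence the $2n$ punctures are always distinct. So every fiber is homeomorphic to $\mathbb{C}$ minus $2n$ points, is path-connected, and has $\pi_1\cong F_{2n}$. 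Local triviality of $Pr$ follows because the $2n$ punctures depend smoothly on $u$ and never collide, so on any simply connected open set in the base one can construct a trivialization by a fiberwise ambient isotopy in the $v$-variable.

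Finally, to upgrade the short exact sequence to a semidirect product I would exhibit an explicit section. Define
\[
s\colon u\longmapsto [1:u:0].
\]
Using the equations above, $s(u)\in L_{x,k}$ only if $u=0$, $s(u)\in L_{y,k}$ would require $0=\zeta_n^{k}$ (never), and $s(u)\in L_{z,k}$ only if $u=\zeta_n^{-k}$; every such forbidden $u$ is already excluded from the base, so $s$ lands in $\mathbb{CP}^2-(\mathcal{C}\cup V(x)\cup V(y))$. Moreover $\overline{\epsilon}=s(\varepsilon)$, so $s$ is compatible with the chosen basepoint. Applying Lemma \ref{crucial_lemma} then yields
\[
\pi_1(\mathbb{CP}^2-(\mathcal{C}\cup V(x)\cup V(y)),\overline{\epsilon})\;\cong\; F_{n+1}\ltimes F_{2n},
\]
which is the stated corollary. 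The main point to verify carefully is the fibration property of $Pr$; everything else reduces to bookkeeping with the affine equations of the $3n$ lines, and the non-collision of punctures in the fiber is the combinatorial heart of the argument.
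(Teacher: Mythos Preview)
Your proof is correct and follows essentially the same approach as the paper: apply Lemma~\ref{crucial_lemma} to the projection $Pr$, identify the base as $\mathbb{C}$ minus $n+1$ points (with trivial $\pi_2$) and the fiber as $\mathbb{C}$ minus $2n$ points, and use the section $t\mapsto[1:t:0]$ to split the sequence. You have simply filled in details the paper leaves implicit, notably the verification that the $2n$ fiber punctures never collide and that the section lands in the complement.
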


\begin{proof}
	The projection $Pr$ is a fibration. Its base is $\mathbb{C} - \{0, 1, \zeta_n, \zeta_n^2, \ldots, \zeta_n^{n-1}\}$ which is path connected and have a trivial second homotopy group (since it has a deformation retract to a graph).
	The fiber of $Pr$ is $\mathbb{C} - \{ 2n\text{ points}\}$ which is path connected.
	Thus the result follows from Lemma \ref{crucial_lemma}, since the map $ t \mapsto [1:t:0]$ is a section for $Pr$.
\end{proof}

\begin{Def}\label{Definition of $R_1$}
	We define $R_1$ to be the set of all relations that holds among the generators of $\pi_1(\mathbb{CP}^2-(\mathcal{C}\cup V(x) \cup V(y)), \overline{\epsilon})$. 
\end{Def}

Next we compute the action of the base fundamental group $\mathcal{F}_{n+1}$ on the fundamental group $\mathcal{F}_{2n}$ of the fiber space and list down all the relations. Detailed computations can be found in Lemma \ref{Prop for R1 relations} and Section \ref{Fundamental group of Uinfinity}. The main remaining task is to reattach $V(x)$ and $V(y)$ back into the space $\mathbb{CP}^2-\mathcal{C} \cup V(x) \cup V(y)$ and compute the required fundamental group. We accomplish this in two steps: first, we add $V(x)$, considering an open neighbourhood  $U_0$ of $V(x)$, compute its fundamental group, and then apply the Van Kampen Theorem on the open sets $U_0$ and 
$\mathbb{CP}^2-\mathcal{C} \cup V(x) \cup V(y)$ to obtain the fundamental group of $\mathbb{CP}^2-\mathcal{C} \cup V(y)$, based at the basepoint $\overline{\epsilon}$. 
Analogously, in the second step, we consider a neighbourhood $U_{\infty}$ of $V(y)$, compute its fundamental group, and use the open sets $U_{\infty}$ and $\mathbb{CP}^2-\mathcal{C} \cup V(x) \cup V(y)$ to apply the Van Kampen Theorem
and  compute the fundamental group of $\mathbb{CP}^2-\mathcal{C} \cup V(y)$. 
This step has the additional complexity that $U_{\infty}$ does not contain our basepoint, so we need to change the basepoint be conjugation with a path from $\overline{\epsilon}$ to a point inside $U_{\infty}$ (see Section \ref{final computation}).

Afterwards, we use the Van Kampen Theorem once again with open covers $\mathbb{CP}^2-(\mathcal{C} \cup V(x))$ and $\mathbb{CP}^2-(\mathcal{C} \cup V(y))$ of $\mathbb{CP}^2-\mathcal{C}$ to obtain the 
final result.

\section{Fermat arrangements}

\subsection{Fundamental group of $U_0$}\label{fundamental group of U0}
We first pick $U_0$ and compute $\pi_1(U_0, \overline{\epsilon})$. We write $U_0$ as a union of two open sets $U_{0,1}$ and $U_{0,2}$ defined as follows:
\begin{eqnarray*}
U_{0,1} &:=& \{ [x:y:z] \in U_0 \ | \ \left|\frac{z}{x}\right| < {\frac{1}{2}} \}, \ \text{and} \\
U_{0,2} &:=& \{ [x:y:z] \in U_0 \ | \ \left|\frac{z}{x}\right| > {\frac{1}{3}} \}.
\end{eqnarray*}

\subsubsection{The set $U_{0,1}$}
The set $U_{0,1}$ is homotopy equivalent to a neighbourhood $U_{0,1}'$ such that the map
\[
Pr_{0,1} : U_{0,1}' \longrightarrow B_{2\e}(0)-\{0\}
\]
defined by,
\[
 [x:y:z] \mapsto \frac{y-z}{x}
\]
is a fibration with fiber isomorphic to $\mathbb{C} - \{n-1 \ \text{points}\}$ (corresponding to the lines $L_{x,i}$, for $i=1,2,...,n-1$). 
We then get from Lemma \ref{crucial_lemma} the following short exact sequence of groups:
\[
1\longrightarrow \pi_1(\mathbb{C} - \{n-1 \ \text{points}\}, \overline{\epsilon}) \longrightarrow \pi_1(U_{0,1}, \overline{\epsilon}) \longrightarrow \pi_1(B_{2\e}(0)-\{0\}, \e) \longrightarrow 1.
\]
Let $g_i$ and $g_i'$  ($0\leq i \leq n-1$) be loops based at the basepoint $\overline{\epsilon} \in \CP$ going around the points $L_{x,i} \cap Pr_{0,1}^{-1}(\e)$ and $L_{y,i} \cap Pr_{0,1}^{-1}(\e)$ respectively. It is clear that the fundamental group $\pi_1(U_{0,1}, \overline{\epsilon})$ is generated by the loops $g_i$ with some relations. We need to find out these relations. In this step, first we construct loops $\overline{g_i}$ based at $\e$ and homotopic to $g_i$. To do this we consider a small number $\delta$ and  write down the equation of $g_i$ as follows:
\begin{eqnarray}\label{eqn of gi}
g_i(t) =
         \begin{cases}
            [1: \e: t\z_n^{n-i}(\e - \delta)], \quad 0 \leq t \leq 1 \\
            [1: \e: \z_n^{n-i}(\e - \delta \exp(2\pi i(t -1))], \quad 1 \leq t \leq 2 \\
            [1: \e: \z_n^{n-i}(\e - \delta)(3-t)], \quad 2 \leq t \leq 3.
         \end{cases}
\end{eqnarray}
We project these $g_i$ ($i \neq 0$) through $Pr_{0,1}$ and notice that these are null-homotopic in $B_{2\e}(0)-\{0\}$. So that, we can take this homotopy and lift it (by $Pr_{0,1}$) to $U_{0,1}$ to get loops contained in the fiber of $Pr_{0,1}$ and still homotopic to $g_i$. We denote these loops by $\overline{g_i}$ and their equations are 
\begin{eqnarray*}
\overline{g_i}(t) = 
                        \begin{cases}
                           [1 - (1 - \delta/\e)t\z_n^{n-i}: \e: (\e - \delta)t\z_n^{n-i}], \quad 0 \leq t \leq 1 \\
                           [1 - (1 - \delta/\e)\z_n^{n-i}\exp(2\pi i(t -1)): \e: \z_n^{n-i}(\e - \delta \exp(2\pi i(t -1)))], \quad 1 \leq t \leq 2 \\
                           [1 - (1 - \delta/\e)(3-t)\z_n^{n-i}: \e: \z_n^{n-i}(\e - \delta)(3-t)], \quad 2 \leq t \leq 3.                                                                    
                       \end{cases}
\end{eqnarray*}
This is a loop around the point $[(\z_n^i - 1) : \e\z_n^i : \e] = Pr_{0,1}^{-1}(\e) \cap L_{x,i}$. We fix the coordinate on $Pr_{0,1}^{-1}(\e)$ to be $y/x$, then these points become $\frac{\e\z_n^i}{\z_n^i - 1}$. It is not difficult to see that these points 
lie on the line $Re(y/x) = \e/2$. 
Note that $Pr_{0,1}(g_0)$ is a loop in $B_{2\e}(0)-\{0\}$ starting at $\e$ and going till $\delta$ in straight line and then making a counter clockwise loop around origin before returning to $\e$. We denote this loop too by $g_0$ and compute the monodromy action of $g_0$ on $g_i$ ($1\leq i \leq n-1$) as shown in Figure \ref{fig:action of $g_0$ in general case}.

\begin{figure}[H]
\begin{center}
\raggedleft
\begin{minipage}{13cm}
  \includegraphics[width=0.6\linewidth]{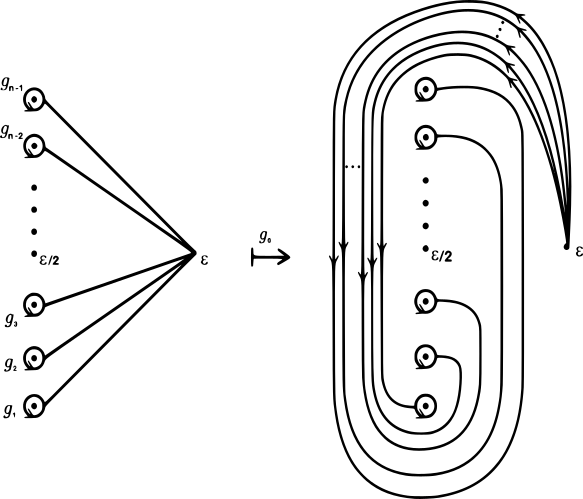}
  \end{minipage}
    \caption{Action of $g_0$ on $g_i$}
      \label{fig:action of $g_0$ in general case}
\end{center}
\end{figure}

Therefore, we get the following relations

\begin{eqnarray}\label{relation of $g_0$ on general n}
g_0 . g_i = g_{n-1}g_{n-2}...g_1g_ig_1^{-1}...g_{n-2}^{-1}g_{n-1}^{-1}
\end{eqnarray}
for $1\leq i \leq n-1$.
This shows that the fundamental group is
 \[
 \pi_1(U_{0,1}, \overline{\epsilon}) = \  \langle g_0,g_1,...,g_{n-1}\ | \ (\ref{relation of $g_0$ on general n})  \ \text{holds for $1\leq i \leq n-1$} \rangle.
 \]
 
 \subsubsection{The set $U_{0,2}$} 
 Now we turn to $U_{0,2}$ and note that $U_{0,2}$ does not contain the basepoint $\overline{\epsilon}$. To include the basepoint we redefine $U_{0,2}$ by attaching a contractible strip ``$S$" between $g_0$ and $g_{n-1}$ as depicted in Figure \ref{Inclusion of a strip}, containing $\overline{\epsilon}$. Note that it does not change the 
 fundamental group of $U_{0,2}$.
 From now on, $U_{0,2}$ will contain $S$.  

\begin{figure}[H]
\begin{center}
\begin{minipage}{7cm}
  \includegraphics[width=0.7\linewidth]{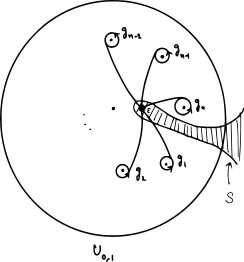}
  \end{minipage}
    \caption{Inclusion of a strip in $U_{0,2}$}
      \label{Inclusion of a strip}
\end{center}
\end{figure}

We now define the loop $g_i'$ precisely (note that it is contained in $U_{0,2}$ in view of Figure \ref{Inclusion of a strip}). Before that we define for all $0 \leq j \leq n-1$,
\begin{eqnarray}\label{eqn of gi-prime}
h_{j}'(t) = 
                        \begin{cases}
                           [1 : \e: \frac{t(1+\z_n^{n-1})}{2}], \quad 0 \leq t \leq 1 \\
                           [1 : \e: \frac{1}{2}(\exp(2\pi i (t-1)j/n)+(2-t)\z_n^{n-1})], \quad 1 \leq t \leq 2 \\
                           [1 : \e: (\frac{(3-t)}{2} + (t-2)(1-\delta))\z_n^{j}], \quad 2 \leq t \leq 3 \\
                           [1 : \e: (1-\delta\exp{2\pi i(t-3)})\z_n^j], \quad 3 \leq t \leq 4 \\                                                       
                           [1 : \e: (\frac{(t-4)}{2} + (t-3)(1-\delta))\z_n^{j}], \quad 4 \leq t \leq 5 \\                                                                        
                           [1 : \e: \frac{1}{2}(\exp(2\pi i (6-t)j/n)+(t-5)\z_n^{n-1})], \quad 5 \leq t \leq 6 \\
                           [1 : \e: \frac{(7-t)(1+\z_n^{n-1})}{2}], \quad 6 \leq t \leq 7.                    \end{cases}
\end{eqnarray}
For all our computations in this paper, we use $g_i'$, as shown in all the figures, wherever it appears from now on. Therefore 
it is clear from those figures and from the equation defined for $h_i'$ that 
\begin{eqnarray*}
    g_0' &=& g_0^{-1}h_0'g_0 \\
    g_1' &=& g_{n-1}^{-1}g_0^{-1}h_1'g_0g_{n-1} \\ 
    &\vdots& \\
    g_{n-1}' &=& g_1^{-1} \cdots g_{n-1}^{-1}g_0^{-1}h_{n-1}'g_0g_{n-1} \cdots g_1.
\end{eqnarray*}
Although we do not use the above relations anywhere in our computation, we have given it for the sake of complete 
information for the readers. We now note the following

\begin{lemma}\label{geometry of U0,2}
$U_{0,2}$ is homotopy equivalent to $\mathbb{C}-\{n+1 \ \text{points} \}$.
\end{lemma}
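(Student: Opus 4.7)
The plan is to work in the affine chart $x=1$ with coordinates $(y,z)$, pin down precisely which components of $\C$ actually meet $U_{0,2}$, and then realize $U_{0,2}$ as a trivial disk bundle over a planar domain whose homotopy type can be read off directly. For $\e$ small enough, say $\e<1/6$, the lines $L_{x,k}\colon y=\z_n^k z$ meet $U_{0,2}$ only where $|y|=|z|>1/3$, which contradicts $|y|<2\e$, and the lines $L_{z,k}\colon y=\z_n^{-k}$ lie on $|y|=1$, again outside $|y|<2\e$. Hence the only components of $\C$ intersecting $U_{0,2}$ are the $n$ horizontal lines $L_{y,k}\colon z=\z_n^k$, so in this chart the region $U_{0,2}$ (ignoring the attached strip for the moment) is
\[
\bigl\{(y,z)\in\mathbb{C}^2 : |y|<2\e,\ |z|>1/3,\ z\ne \z_n^k\ \text{for all}\ k\bigr\}.
\]

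Next I would consider the projection $q\colon U_{0,2}\to B$ defined by $(y,z)\mapsto z$, where $B:=\{z\in\mathbb{C} : |z|>1/3\}-\{\z_n^0,\ldots,\z_n^{n-1}\}$. Because the constraint on $y$ does not depend on $z$, the map $q$ realises $U_{0,2}$ as a product $\{|y|<2\e\}\times B$ whose fibre is a contractible open disk. The straight-line retraction $(y,z)\mapsto((1-t)y,z)$ then exhibits a deformation retraction of $U_{0,2}$ onto the section $\{0\}\times B$, so $U_{0,2}\simeq B$.

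It remains to identify the homotopy type of $B$. The set $\{|z|>1/3\}$ is the exterior of a closed disk in $\mathbb{C}$ and is therefore homotopy equivalent to $\mathbb{C}$ minus a point; deleting the further $n$ points $\z_n^0,\ldots,\z_n^{n-1}$, all lying on $|z|=1$ and hence in that exterior, produces a genus-zero open surface with $n+1$ punctures, which has the homotopy type of a wedge of $n+1$ circles and so is homotopy equivalent to $\mathbb{C}-\{n+1\text{ points}\}$.

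Finally, to accommodate the attached strip $S$, I would observe that $S$ is contractible by construction and that its overlap with the original $U_{0,2}$ is a contractible open arc; a direct deformation retraction of $U_{0,2}\cup S$ back onto the original $U_{0,2}$ (essentially pushing the strip away from the region $|z|\le 1/3$) therefore preserves the homotopy type. The main obstacle is really just bookkeeping, namely fixing $\e$ small enough so that the enumeration of non-intersecting lines above is correct and the product description of $U_{0,2}$ is valid; once this is arranged, the rest of the argument is immediate.
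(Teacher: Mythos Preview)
Your proof is correct and follows essentially the same approach as the paper: pass to the affine chart $x=1$, observe that for small $\e$ the constraints from $L_{x,k}$ and $L_{z,k}$ are vacuous so that $U_{0,2}$ is a product $B_{2\e}(0)\times\bigl(\{|z|>1/3\}-\{\z_n^k\}\bigr)$, and conclude by contractibility of the disk factor. Your treatment is in fact slightly more detailed than the paper's, which does not spell out why the punctured exterior of a disk has the homotopy type of $\mathbb{C}-\{n+1\text{ points}\}$ and does not separately address the attached strip $S$.
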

\begin{proof}
	Since $ x\ne 0 $ in $ U_{0,2} $ we can switch to affine coordinates and see that 
	\begin{align*}
		U_{0,2} & \cong \left\{(y', z')\in \mathbb{C}^2 \; | \; |y'|<2\varepsilon, |z'|>\frac{1}{3}, (y')^n \ne 1, (z')^n \ne 1, (y')^n \ne (z')^n \right\} = \\
		& = \left\{(y', z')\in \mathbb{C}^2 \; | \; |y'|< 2\varepsilon, |z'|>\frac{1}{3}, (z')^n \ne 1 \right\} = \\
		& = B_{2\varepsilon}(0) \times \left( \mathbb{C} - \left( \overline{B_{1/3}(0)}\cup \left\{\zeta_n^k \; | \; k=0,\dots,n-1 \right\} \right) \right).
	\end{align*}
	Since $ B_{2\varepsilon}(0) $ is contractible, we get the desired result.
\end{proof}

From the definition of $U_{0,2}$ and the above lemma, it is thus clear that the fundamental group $\pi_1(U_{0,2}, \overline{\epsilon})$ is the free group generated by $g_j'$ for $j = 0,1,2,...,n-1$ and one additional generator, we now express this new generator as a product of $g_0, \dots, g_{n-1}$.
In fact it is clear from the Figure \ref{Inclusion of a strip}, that the additional generator is equal to $g_0g_{n-1}...g_1$, i.e., a product of $g_j$ in cyclic order starting from $g_0$. We now show that all such cyclic products starting from any $g_j$, are one and same. To this end we note from (\ref{relation of $g_0$ on general n}), the following
\begin{eqnarray}\label{gi commutes for all i}
    g_0^{-1}g_jg_0 &=& g_{n-1}g_{n-2}...g_1g_jg_1^{-1}...g_{n-2}^{-1}g_{n-1}^{-1} \nonumber \\ 
    &\Rightarrow& [g_i,\ g_0g_{n-1}g_{n-2}...g_1] = e,
\end{eqnarray}
for all $1\leq j \leq n-1$. This, in particularly for $j=1$, gives
\begin{eqnarray}\nonumber\label{cyclic:1}
    g_1g_0g_{n-1}g_{n-2}...g_1 = g_0g_{n-1}g_{n-2}...g_1g_1 \\
    \Rightarrow g_1g_0g_{n-1}g_{n-2}...g_2 = g_0g_{n-1}g_{n-2}...g_1.
\end{eqnarray}
Substituting this value in (\ref{gi commutes for all i}), we see that
\[
[g_j,\ g_1g_0g_{n-1}g_{n-2}...g_2] = e,
\] 
for all $1\leq j \leq n-1$. 
We again use the above relation for $g_2$ and the relation obtained in (\ref{cyclic:1}), to get 
\[
g_2g_1g_0g_{n-1}g_{n-2}...g_3 = g_0g_{n-1}g_{n-2}...g_1.
\]
Continuing with this process, we inductively get 
\begin{eqnarray}\label{cyclic ration}
g_j...g_0g_{n-1}g_{n-2}...g_{j+1} = g_0g_{n-1}g_{n-2}...g_1.
\end{eqnarray}
for all $j = 1,2,...,n-1$. 
The equal products that apper in \eqref{cyclic ration} will appear numerous times in the following, so we denote it from now on as
\begin{equation}\label{definition of cyclic}
	\mathfrak{g} := g_{n-1}\ldots g_1g_0.
\end{equation}
This proves that the position of the contractible strip $S$ that we added, does not create any ambiguity in the 
computation of the fundamental group. 
It is clear that $\pi_1(U_{0,1}\cap U_{0,2}, \overline{\epsilon})$ is infinite cyclic generated by $ g_0g_{n-1}\ldots g_1$.
By the  Seifert-Van Kampen Theorem,  we get 
 \begin{eqnarray}\label{Fundamental group of U0}\nonumber
 \pi_1(U_{0}, \overline{\epsilon}) &=& \pi_1(U_{0,1}, \overline{\epsilon}) *_{\pi_1(U_{0,1} \cap U_{0,2})}\pi_1(U_{0,2}, \overline{\epsilon})  \\ 
 &=& \langle g_0,...,g_{n-1},g_0',...,g_{n-1}'| \ (\ref{cyclic ration}) \ \text{ holds for $1\leq i \leq n-1$}\rangle.
 \end{eqnarray}
\subsection{Fundamental group of $U_0-V(y)$}\label{fundamental group of U0-V(Y)}

 We use the projection defined in (\ref{original_proj}) to compute the fundamental group $\pi_1(U_{0}-V(y), \overline{\epsilon})$. Notice that the restriction of the map $Pr$ to $U_0-V(y)$ 
 \[
 Pr|_{U_{0}-V(y)} : U_{0}-V(y) \longrightarrow B_{2\varepsilon}(0)-\{0\}
 \] 
 as defined in (\ref{original_proj}), is a fibration with each fiber homeomorphic to $\mathbb{C}-\{2n\ \text{points}\}$. 
 Therefore using the Lemma \ref{crucial_lemma}, the fundamental group 
 \[
 \pi_1(U_{0}-V(y), \overline{\epsilon}) = \mathbb{Z} \ltimes F_{2n},
 \]
 where $\mathbb{Z}$ is generated by $\gamma_0 := \e \exp(2\pi it)$ (denoting the base curve) and $F_{2n}$ is a free group on $2n$ generators, namely, $g_j$ and $g_k'$ for $0\leq j, k \leq n-1$. 
 
 We now compute the monodromy action of $\gamma_0$ on $g_j$ and $g_k'$. In order to do so, we first denote by $\mathcal{F}_t$ to be the fiber of $Pr$ over $\e\exp(2\pi it)$. Note that the points
 \[
 \mathcal{F}_t \cap L_{y,j} = [1: \e \exp(2\pi it): \z_3^j], \; \; \text{and} \; \; \mathcal{F}_t \cap L_{x,j} = [1: \e \exp(2\pi it): \e \exp(2\pi it)\z_n^{-j}],
 \]
in $\frac{z}{x}$ coordinates, at $t =0, \frac{1}{4},\frac{1}{2},\frac{3}{4}$  are
\begin{eqnarray*}
\z_n^j, \ \z_n^j, \ \z_n^j, \ \z_n^j \quad \text{and} \quad \e \z_n^{-j}, \e i \z_n^{-j}, -\e \z_n^{-j}, \ -\e i \z_n^{-j}
\end{eqnarray*} 
respectively. This, in particular indicates that in the fibre $\mathcal{F}_t$, the $g_j$ appears clockwise while $g_k'$ appears counterclockwise. Action of $\gamma_0$ on the fibre is as follows:
\begin{figure}[H]
\begin{center}
\raggedleft
\begin{minipage}{13cm}
  \includegraphics[width=0.6\linewidth]{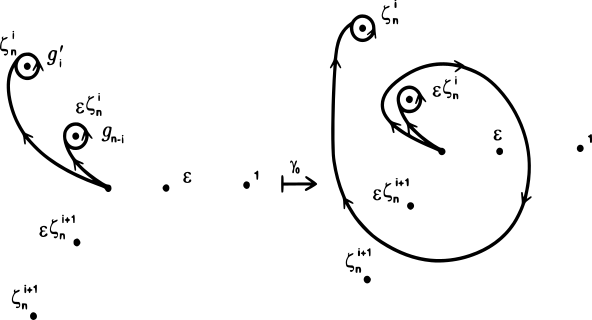}
  \end{minipage}
    \caption{Action of $\gamma_0$}
      \label{fig:action for gamma-0-general case.}
\end{center}
\end{figure}
From this figure we get,
\begin{eqnarray}\label{Action of gamma-0-general}
\gamma_0 . g_i &=& g_i \\ \nonumber
\gamma_0 . g_0'&=& g_0^{-1}g_1^{-1}...g_{n-1}^{-1}g_0^{-1}g_0'g_0g_{n-1}...g_1g_0, \\ \nonumber
\gamma_0 . g_i'&=& g_{n-i}^{-1}g_{n-i+1}^{-1}...g_{n-1}^{-1}g_0^{-1}g_1^{-1}...g_{n-i-1}^{-1}g_{n-i}^{-1}g_i'\times  \\
&& \times g_{n-i}g_{n-i-1}...g_1g_0g_{n-1}...g_{n-i+1}g_{n-i}, \nonumber
\end{eqnarray}
for $1 \leq i \leq n-1$.

By \eqref{cyclic ration}, we have proven the following lemma.

\begin{lemma}\label{lemma:fund_group_of_U0_minus_y}
	The fundamental group of $U_0-V(y)$ is the semi-direct product $\mathbb{Z}\ltimes F_{2n}$, where the free group $F_{2n}$ is generated by $\{g_0, \dots, g_{n-1}, g'_0,\dots, g'_{n-1}\}$, the free cyclic group is generated by $\gamma_0$, and we have 
	\begin{eqnarray}\label{Action of gamma-0-general-simplified}
		\gamma_0^{-1} g_i \gamma_0 &=& g_i \\ \nonumber
		\gamma_0^{-1} g_i' \gamma_0 &=& \mathfrak{g}^{-1}g_{n-i}^{-1}g_i'g_{n-i}\mathfrak{g}, \nonumber
	\end{eqnarray}
	as denoted in \eqref{definition of cyclic}.
\end{lemma}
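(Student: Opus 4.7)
The plan is to apply Lemma \ref{crucial_lemma} to the restriction of the projection $Pr$ to $U_0 - V(y)$, and then to rewrite the resulting monodromy action in the cleaner form claimed. First, I would verify that $Pr|_{U_0 - V(y)}$ remains a fibration: the base $B_{2\varepsilon}(0) - \{0\}$ is a punctured disc, hence path-connected with $\pi_2 = 1$ (it deformation retracts onto $S^1$) and $\pi_1 = \mathbb{Z}$ generated by $\gamma_0$. Every fibre is the affine line punctured at the $n$ points $\mathcal{F}_t \cap L_{x,j}$ and the $n$ points $\mathcal{F}_t \cap L_{y,j}$, so it is path-connected with $\pi_1 \cong F_{2n}$, a basis being $\{g_0, \ldots, g_{n-1}, g_0', \ldots, g_{n-1}'\}$. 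A section is given by $w \mapsto [1 : w : 0]$, which lies in $U_0 - V(y)$ for all $w \in B_{2\varepsilon}(0) - \{0\}$ (the condition $|y/x| < 2\varepsilon$ is immediate, and for small $\varepsilon$ this point avoids $\mathcal{C}$). Lemma \ref{crucial_lemma} then yields $\pi_1(U_0 - V(y), \overline{\epsilon}) \cong \mathbb{Z} \ltimes F_{2n}$.

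The next step is the geometric monodromy calculation that produces (\ref{Action of gamma-0-general}). Working in the affine chart $x = 1$ with $z$ as fibre coordinate, the punctures coming from $L_{x,j}$ sit at $\varepsilon e^{2\pi i t}\zeta_n^{-j}$ and sweep once around the origin as $t$ runs over $[0,1]$, while the punctures coming from $L_{y,j}$ sit at the fixed points $\zeta_n^j$. Since the inner ring of punctures rotates rigidly and never crosses the outer ring, each loop $g_i$ is isotoped to itself, giving $\gamma_0 \cdot g_i = g_i$. For the outer loops $g_i'$, however, the arc running from the basepoint out to the outer puncture is dragged through the full set of inner punctures by the rigid rotation, producing the long conjugating word recorded in (\ref{Action of gamma-0-general}) and depicted in Figure \ref{fig:action for gamma-0-general case}.

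Finally, I would simplify this conjugator. The word $g_{n-i} g_{n-i-1}\ldots g_0 g_{n-1} \ldots g_{n-i+1}$ appearing in the action on $g_i'$ is exactly the left-hand side of (\ref{cyclic ration}) with index $j = n-i$, so it collapses to $\mathfrak{g}$; the full conjugator is thus $\mathfrak{g} \cdot g_{n-i}$. Since $\mathfrak{g}$ commutes with every generator $g_k$ by (\ref{gi commutes for all i}), we may swap $g_{n-i}$ past $\mathfrak{g}$ on both sides and obtain
\[
\gamma_0^{-1} g_i' \gamma_0 \;=\; (\mathfrak{g} g_{n-i})^{-1} g_i' (\mathfrak{g} g_{n-i}) \;=\; \mathfrak{g}^{-1} g_{n-i}^{-1} g_i' g_{n-i} \mathfrak{g},
\]
which is the stated formula.

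The main obstacle is the monodromy step: one must carefully trace each $g_i$ and $g_i'$ through the rotating configuration of inner punctures to justify (\ref{Action of gamma-0-general}), since the inner and outer rings of punctures behave very differently under $\gamma_0$. Once this is cleanly established the cosmetic simplification via the cyclic identity \eqref{cyclic ration} and the centrality of $\mathfrak{g}$ is routine, so the lemma follows immediately.
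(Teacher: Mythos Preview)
Your proposal is correct and follows the same route as the paper: apply Lemma~\ref{crucial_lemma} to the fibration $Pr|_{U_0-V(y)}$ (with the section $w\mapsto[1:w:0]$) to obtain $\mathbb{Z}\ltimes F_{2n}$, read off the monodromy \eqref{Action of gamma-0-general} geometrically from the rotating inner ring of punctures, and then simplify the conjugator via the cyclic identity~\eqref{cyclic ration}. The only cosmetic difference is that you pass through the commutator relation $[\mathfrak{g},g_{n-i}]=e$ of~\eqref{gi commutes for all i} to rearrange $\mathfrak{g}\cdot g_{n-i}$ into $g_{n-i}\cdot\mathfrak{g}$, whereas the paper obtains $g_{n-i}\cdot\mathfrak{g}$ directly by grouping the cyclic word the other way; since these two facts are equivalent, the arguments are the same.
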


\begin{rmk}\label{rmk:modular_indices}
	Writing $g_M$, $g_N'$, and $g_S''$ (for $M,N,S \geq n$) means $g_{\overline{M}}$, $g_{\overline{N}}'$ and $g_{\overline{S}}''$. Here $\overline{M} := M\; \text{mod} \; n$, $\overline{N} := N\; \text{mod} \; n$, and $\overline{S} := S\; \text{mod} \; n$.
\end{rmk}

\subsection{Fundamental group of $\mathbb{CP}^2-(\mathcal{C}\cup V(x))$}\label{fundamental group of CP2-(C U V(X))}
Now, we consider open connected subsets $U_{0}$ and $\mathbb{CP}^2-(\mathcal{C}\cup V(x) \cup V(y))$ of $\mathbb{CP}^2-\mathcal{C}$ and use the Seifert-Van Kampen Theorem to derive the fundamental group of $\mathbb{CP}^2-(\mathcal{C}\cup V(x))$. To accomplish this, we utilize the fundamental group computed in Sections \ref{fundamental group of U0} and  \ref{fundamental group of U0-V(Y)} and observe that 
\begin{eqnarray*}
    U_{0} \cup (\mathbb{CP}^2-(\mathcal{C}\cup V(x) \cup V(y))) &=& \mathbb{CP}^2-(\mathcal{C}\cup V(x)), \; \text{and} \\
    U_{0} \cap (\mathbb{CP}^2-(\mathcal{C}\cup V(x) \cup V(y))) &=& U_{0}-V(y).
\end{eqnarray*}
Before proceeding further, we define $g_j''$ for $0 \leq j \leq n-1$ as follows: 
\begin{eqnarray}\label{Defn of gi''}
g_{n-j}''(t) :=
         \begin{cases}
            [1: \e\exp(2\pi itj/n): 0] \quad 0 \leq t \leq 1 \\
            [1: \z_n^j(2\e+\delta -1)+ t(\z_n^j(1-\delta - \e)) : 0] \quad 1 \leq t \leq 2 \\
            [1: \z_n^j(1 - \delta\exp(2\pi i(t-2))): 0] \quad 2 \leq t \leq 3 \\
            [1: \z_n^j(2\e+\delta -1)+ (5-t)(\z_n^j(1-\delta - \e)) : 0] \quad 3 \leq t \leq 4 \\
            [1: \e\exp(2\pi i(5-t)j/n): 0] \quad 4 \leq t \leq 5.
         \end{cases}
\end{eqnarray}
Note that $g_{n-j}''$ for $1\leq j \leq n-1$ is a loop around $L_{z,{n-j}}$ and $g_0'' := g_n''$ is a loop around $L_{z,0}$. \\

\begin{Def}
    We define $\gamma_{j+1} := Pr(g_{n-j}'') \subset \mathbb{C}$ for $j=0, 1, ..., n-1$, where $Pr$ is the projection defined in (\ref{original_proj}).
\end{Def}
Now, we use the Seifert-Van Kampen Theorem, to get
\begin{eqnarray}\label{Fundamental group of U_0 part}
\pi_1(\mathbb{CP}^2-(\mathcal{C}\cup V(x)),\overline{\epsilon}) = \
\langle g_i,g_i',g_i''\ (0\leq i \leq n-1)\ | \ (\ref{cyclic ration})\ \text{holds for}
 \ 1\leq i \leq n-1, \\ \nonumber \text{Relations $R_1$ from Definition \ref{Definition of $R_1$}} \rangle,
\end{eqnarray}
We want to compute the relations in $R_1$ induced by the monodromy action of (\ref{Fundamental group of $U$}), but before that we will need to describe an embedding of $\gamma_0$ inside $\mathbb{CP}^2 - (\mathcal{C}\cup V(x))$ in terms of $g_0,\dots,g_{n-1}$.

\begin{lemma}\label{lemma:gamma_0_image}
	Let 
	$$\text{in}:B_{2\epsilon}(0)-\{0\} \to \mathbb{CP}^2 - (\mathcal{C}\cup V(x) \cup V(y))$$
	be the inclusion $ t\mapsto [1:t:0]$, then 
	\begin{equation}\label{eq:gamma_0_is_g}
		\text{in}_*(\gamma_0)=\mathfrak{g},
	\end{equation}
	as defined in \eqref{definition of cyclic}.
\end{lemma}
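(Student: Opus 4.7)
The plan is to realize the equality $\text{in}_*(\gamma_0)=\mathfrak{g}$ via an explicit geometric deformation of loops. The geometric picture is the following: the loop $\text{in}(\gamma_0)(\theta)=[1:\varepsilon e^{2\pi i\theta}:0]$ sits entirely on the line $V(z)$ and, as $\theta$ traverses $[0,1]$, encircles the multiple point $q_x=[1:0:0]$, at which all $n$ removed lines $L_{x,0},\dots,L_{x,n-1}$ of the Fermat arrangement meet. Although intrinsically it is only a small single-puncture circle on $V(z)$, the disk $D=\{[1:t:0]\,|\,|t|\le\varepsilon\}\subset V(z)$ that it bounds meets each $L_{x,k}$ transversally at $q_x$; consequently in the ambient $\pi_1$ the class of $\text{in}(\gamma_0)$ naturally splits as an ordered product of meridians around the $L_{x,k}$.

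To make this precise I would construct a homotopy in $\mathbb{CP}^2-(\mathcal{C}\cup V(x)\cup V(y))$ of the form $H(s,\theta)=[1:\varepsilon e^{2\pi i(1-s)\theta}:\phi(s,\theta)]$ with $\phi(0,\theta)\equiv 0$ and $\phi(1,\theta)=R e^{2\pi i\theta}$ for a real radius $R$ chosen in the corridor $\varepsilon<R<1$. The $L_{y,k}$ condition reduces to $|\phi|=1$, the $L_{z,k}$ condition forces a modulus $\varepsilon|\cdot|=1$, and $V(x)$, $V(y)$ are trivially avoided; all of these are handled by the modulus bounds $|\phi|<1$, $\varepsilon<1$ and $y\ne 0$, $x=1$. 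The delicate case is the $L_{x,k}$-condition $\varepsilon e^{2\pi i(1-s)\theta}=\zeta_n^k\phi(s,\theta)$, which imposes $|\phi|=\varepsilon$ at some $s\in(0,1)$; this is dodged by choosing the phase of $\phi$ at the critical modulus so as to avoid the $n$ critical angular directions dictated by the roots of unity. At $s=1$ the resulting loop lives inside the fiber $Pr^{-1}(\varepsilon)=\{[1:\varepsilon:z]\}$ and traces a large circle of radius $R$ that encloses exactly the $n$ punctures $z=\varepsilon\zeta_n^{-k}$ coming from $L_{x,k}$, whereas the punctures at $z=\zeta_n^k$ coming from $L_{y,k}$ lie on $|z|=1>R$ and remain outside.

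A standard Hurwitz-type expression in the fundamental group of the punctured disk $Pr^{-1}(\varepsilon)-\{\varepsilon\zeta_n^{-k}\}_{k=0}^{n-1}$ then writes the enclosing loop $H(1,\cdot)$ as the ordered product of the meridional generators around each enclosed puncture, the order being dictated by their cyclic angular arrangement relative to the basepoint $\overline{\varepsilon}=[1:\varepsilon:0]$. Matching this with the explicit definition \eqref{eqn of gi} of the loops $g_k$, each $g_k$ being a meridian based at $\overline{\varepsilon}$ around $\varepsilon\zeta_n^{-k}\in L_{x,k}$, identifies this product with $g_{n-1}\cdot g_{n-2}\cdots g_1\cdot g_0=\mathfrak{g}$ as given in \eqref{definition of cyclic}, completing the identification.

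The main obstacle is the construction of the coupling $\phi(s,\theta)$: a naive straight-line homotopy in the $z$-coordinate from $0$ to $Re^{2\pi i\theta}$ inevitably passes through the critical modulus $|\phi|=\varepsilon$, at which the intersection equation with some $L_{x,k}$ is satisfied for some $\theta$. The remedy is to route the $z$-coordinate through a path that crosses the critical radius only at phases avoiding the $n$ forbidden directions $\zeta_n^k e^{-2\pi i(1-s)\theta}$; once this angular choreography is arranged, the remaining Hurwitz identification of the resulting fiber loop with $\mathfrak{g}$ is routine.
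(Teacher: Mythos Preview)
Your approach is correct and shares the same core idea as the paper's proof: push the loop $\text{in}(\gamma_0)$, which lives on the line $V(z)$ and encircles the multiple point $q_x$, into the fiber $Pr^{-1}(\varepsilon)$, where it becomes a large circle enclosing exactly the $n$ punctures $\varepsilon\zeta_n^{-k}$ coming from the $L_{x,k}$, and then read off the product of meridians.

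The execution differs. The paper does not write down an explicit homotopy of the loop; instead it deforms the ambient line: it chooses a one-parameter family $H_t$ of projective lines through $\overline\epsilon$ interpolating between $V(z)$ and $Pr^{-1}(\varepsilon)$, arguing via a dimension count in the $\mathbb{CP}^1$ of lines through $\overline\epsilon$ that one can keep each $H_t$ transverse to every $L_{x,i}$. The intersection points $H_t\cap L_{x,i}$ then trace $n$ continuous paths from $q_x$ to the punctures in the fiber, and the loop, dragged along as a ``big circle'' on each $H_t$, ends up encircling them. One benefit of this pencil picture is that the basepoint $\overline\epsilon$ lies on every $H_t$, so the homotopy is automatically based; by contrast, your explicit homotopy $H(s,\theta)=[1:\varepsilon e^{2\pi i(1-s)\theta}:\phi(s,\theta)]$ with $\phi(1,\theta)=Re^{2\pi i\theta}$ moves the basepoint to $[1:\varepsilon:R]$, so you need to append a conjugation by the radial path in the fiber back to $\overline\epsilon$ before invoking the Hurwitz identification. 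This is harmless, but worth stating.

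Your treatment of the obstacle---the crossing of the critical modulus $|\phi|=\varepsilon$ where an $L_{x,k}$ might be hit---is the explicit counterpart of the paper's finiteness argument; the paper's version is cleaner, while yours makes more transparent exactly which inequalities ($|\phi|<1$, $\varepsilon<1$) dispose of $L_{y,k}$ and $L_{z,k}$. Finally, note that the cyclic word you obtain may a priori depend on the particular routing chosen for $\phi$; the paper remarks that different deformations yield different cyclic shifts $g_j g_{j-1}\cdots g_0 g_{n-1}\cdots g_{j+1}$, all equal to $\mathfrak{g}$ by \eqref{cyclic ration}, so no ambiguity arises.
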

\begin{proof}
	We pick a continuous deformation $H_t$ of lines through $\overline{\epsilon}$ from the line $z=0$ to the fiber of $ Pr^{-1}(\e) $, such that each $H_t$ intersects all the lines $L_{x,i}$.
	Such a deformation exists since $\mathbb{CP}^1$ is the parametrization of the set of all complex lines through a basepoint. Only finitely many lines among them do not intersect $L_{x,i}$, therefore any path in the complement of these finite number of points in $\mathbb{CP}^1$ corresponds to a deformation that satisfies the required property.  

Note that $L_{x,i} \cap H_t$ defines a path in $\mathbb{CP}^2$, which is compact and bounded (being a continuous 
image of interval $[0,1]$). We thus can continuously deform first $\text{in}_*\gamma_0$ in $\mathbb{CP}^2$ in such a way that it lies in $``z=0"$, homotopic to $[1:\e \exp(2\pi it):0]$ and bounds all the $n$ paths (whose boundary is being denoted as points in the right figure) as shown below (Figure \ref{Inclusion of gamma-0.}).

\begin{figure}[H]
	\begin{center}
		\raggedleft
		\begin{minipage}{15cm}
			\includegraphics[width=0.8\linewidth]{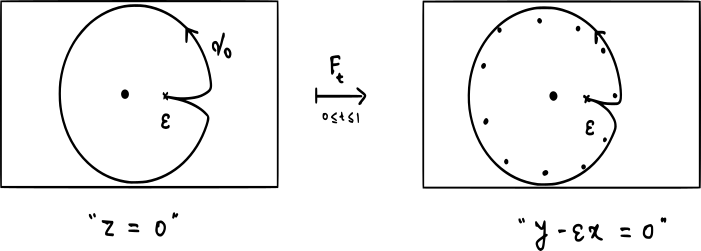}
		\end{minipage}
		\caption{Inclusion of $\gamma_0$ in $\mathbb{CP}^2$}
		\label{Inclusion of gamma-0.}
	\end{center}
\end{figure}

It is then clear that $\text{in}_*\gamma_0 = g_0g_{n-1}\cdots g_1 = \mathfrak{g}$. 
\end{proof}
\begin{rmk}
	Note that the different choices of deformation only gives 
	\[ \text{in}_*\gamma_0 = g_jg_{i-1}...g_0g_{n-1}g_{n-2}...g_{j+1},
	\]
for different $j$. But in view of (\ref{cyclic ration}), there is no ambiguity. 
\end{rmk}

\begin{prop}\label{Prop for R1 relations}
The fundamental group $\pi_1(\mathbb{CP}^2-(\mathcal{C}\cup V(x) \cup V(y)), \overline{\epsilon})$ is generated by the $3n+1$ generators $\{ \gamma_0, g_0, \dots, g_{n-1}, g'_0, \dots, g'_{n-1}, g''_0, \dots, g''_{n-1}\}$ subject to the following relations:
\begin{eqnarray*}
\gamma_0^{-1}g_i \gamma_0 &=&  g_i  \\ 
\gamma_0^{-1}g_i' \gamma_0 &=& \mathfrak{g}^{-1}g_{n-i}^{-1}g_i'g_{n-i}\mathfrak{g}\\ 
 g_{n-k+1}''^{-1} g_{n-i} g_{n-k+1}'' &=& g_{n-i}g_{n-i-1}...g_{n-i-(k-1)}g_{i+(k-1)}'g_{n-i-(k-1)}^{-1}...g_{n-i-1}^{-1}g_{n-i}\cdot \\ 
&& g_{n-i-1}...g_{n-i-(k-1)}g_{i+(k-1)}'^{-1}g_{n-i-(k-1)}^{-1}...g_{n-i-1}^{-1}g_{n-i}^{-1}\\ 
 g_{n-k+1}''^{-1} g_{i}' g_{n-k+1}'' &=& \mathscr{G}^k_{n-i}\mathscr{G}^k_{n-i+1}\cdots \mathscr{G}^k_{n-i+(k-2)} \mathscr{H}^k_{n-i+(k-1)} g_i'\cdot \\
&& {\mathscr{H}^k_{n-i+(k-1)}}^{-1} {\mathscr{G}^k_{n-i+(k-2)}}^{-1} \cdots {\mathscr{G}^k_{n-i+1}}^{-1} {\mathscr{G}^k_{n-i}}^{-1},
 \end{eqnarray*}
 for $1 \leq k \leq n$ and $0 \leq i \leq n-1$ (the definitions of $\mathscr{G}^k_{n-s}$ and $\mathscr{H}^k_{n-i+(k-1)}$ are given 
 in (\ref{def of k-conjugate}) and (\ref{def of half k-conjugate}) respectively, and the indices follow from Remark \ref{rmk:modular_indices}). 
\end{prop}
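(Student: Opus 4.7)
The plan is to derive the claimed presentation directly from the fibration structure established in Corollary \ref{cor:fundamental_group_of_U0_intersect_U_infty}, which gives $\pi_1(\CP-(\C\cup V(x)\cup V(y)),\overline{\epsilon}) = \mathcal{F}_{n+1}\ltimes\mathcal{F}_{2n}$. First I would fix a basis of the base factor $\mathcal{F}_{n+1}=\pi_1(\mathbb{C}-\{0,1,\z_n,\ldots,\z_n^{n-1}\})$ consisting of $\gamma_0$ (a small loop around $0$) together with $\gamma_k$ for $k=1,\ldots,n$ (a loop around $\z_n^{k-1}$). Since $t\mapsto[1:t:0]$ is a section of $Pr$, each $\gamma_k$ lifts canonically, and a direct inspection of the parameterization in (\ref{Defn of gi''}) shows that $g''_{n-k+1}$ lies entirely in $V(z)$ and is exactly this lift; hence $\{\gamma_0,g''_0,g''_1,\ldots,g''_{n-1}\}$ is a basis for the base factor. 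Combined with the fiber generators $g_0,\ldots,g_{n-1},g'_0,\ldots,g'_{n-1}$ we obtain the listed $3n+1$ generators, and the semi-direct product structure forces a complete presentation to consist of exactly the $2n(n+1)$ conjugation relations $b^{-1}fb=(\text{monodromy action})$ for $b$ in the base basis and $f$ in the fiber basis. The four displayed families, indexed by $0\le i\le n-1$ and $1\le k\le n$, count exactly $2n(n+1)$, so it suffices to verify each of them.

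The first two families, describing the action of $\gamma_0$, are precisely Lemma \ref{lemma:fund_group_of_U0_minus_y}, so nothing further need be said. For the last two I would compute the monodromy of $g''_{n-k+1}$ on the fiber generators by directly following the path prescribed in (\ref{Defn of gi''}): it slides the basepoint along $V(z)$ from $\overline{\epsilon}$ past the intermediate base punctures $\z_n^0,\z_n^1,\ldots,\z_n^{k-2}$ to a point within distance $\delta$ of $\z_n^{k-1}$, then encircles $\z_n^{k-1}$, and finally retraces back. During the sliding portion the $2n$ fiber punctures $Pr^{-1}(\cdot)\cap(L_{x,\cdot}\cup L_{y,\cdot})$ move continuously, and I would track their braiding relative to the basepoint of each intermediate fiber to read off the induced conjugation word. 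The nested product $\mathscr{G}^k_{n-s}$, appearing $k-1$ times on each side, records the conjugators accumulated from passing in front of each of the $k-1$ intermediate punctures, while the central word $\mathscr{H}^k_{n-i+(k-1)}$ records the half-twist contributed by the small loop around $\z_n^{k-1}$. Because the $g_{n-i}$'s and $g'_i$'s represent the two different kinds of fiber punctures (those coming from $L_{x,\cdot}$ and those from $L_{y,\cdot}$) and these behave differently under the sliding, the two displayed identities have the same skeletal form but distinct inner structure.

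The main obstacle will be controlling the combinatorial growth in $k$: each increment forces the sliding path to interact with one more puncture, and bookkeeping the exact word of conjugators is delicate. I would treat this by induction on $k$, with $k=1$ as the base case (where the loop only encloses the nearest puncture and yields a short relation computable from a picture in the spirit of Figure \ref{fig:action of $g_0$ in general case}). In the inductive step I would argue that advancing from $g''_{n-k+2}$ to $g''_{n-k+1}$ appends exactly one additional $\mathscr{G}$-factor on each side (corresponding to the newly traversed puncture) and updates the central $\mathscr{H}$-word by one further conjugation, matching the recursive pattern encoded in the forthcoming definitions (\ref{def of k-conjugate}) and (\ref{def of half k-conjugate}). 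Once all four relation families are verified for the full ranges of $i$ and $k$, the semi-direct product description of Corollary \ref{cor:fundamental_group_of_U0_intersect_U_infty} immediately yields the claimed presentation.
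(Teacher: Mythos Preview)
Your proposal is correct and follows essentially the same route as the paper: both exploit the semi-direct product from Corollary \ref{cor:fundamental_group_of_U0_intersect_U_infty}, identify the section-lifts of the base loops $\gamma_k$ with the $g''_{n-k+1}$ via the parameterization (\ref{Defn of gi''}), invoke Lemma \ref{lemma:fund_group_of_U0_minus_y} for the $\gamma_0$-action, and then obtain the remaining relations by reading off the monodromy of each $\gamma_k$ on the fiber generators from the motion of the $2n$ fiber punctures. The only difference is in how the general $\gamma_k$-formula is established: the paper works out $k=1,2,3$ explicitly from pictures and then writes down the pattern, whereas you propose a formal induction on $k$; both lead to the same formulas involving (\ref{def of k-conjugate}) and (\ref{def of half k-conjugate}).
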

\begin{proof}[Proof of Lemma \ref{Prop for R1 relations}]
It is clear from the definition of $\gamma_i$ that the fundamental group of the base space of the projection map 
$Pr$ (from (\ref{original_proj})) is generated by $\gamma_0,\gamma_1,...,\gamma_{n-1},\gamma_n$. 
Therefore, we compute the monodromy action of these on the fibers of $Pr$.  
The action of $\gamma_0$ was computed in Section \ref{fundamental group of U0-V(Y)}, see Lemma \ref{lemma:fund_group_of_U0_minus_y}.
Action of $\gamma_1$ on the fiber is as shown in the below figure
\begin{center}
\begin{figure}[H]
\raggedleft
\begin{minipage}{15cm}
  \includegraphics[width=0.7\linewidth]{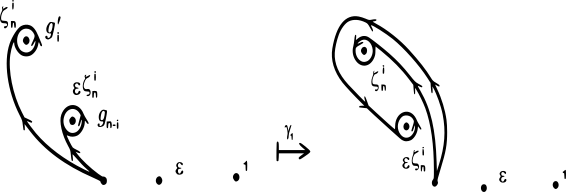}
  \caption{Action of $\gamma_1$}
  \label{fig:action for gamma-1-general case.}
  \end{minipage}
\end{figure}
\end{center}
Therefore, we get for each $0 \leq i \leq n-1$,
\begin{eqnarray}
\gamma_1 \cdot g_{n-i} = g_0''^{-1}g_{n-i}g_0'' &=& g_{n-i}g_i'g_{n-i}g_i'^{-1}g_{n-i}^{-1} \\ \nonumber
\gamma_1 \cdot g_i'= g_0''^{-1}g_{i}'g_0'' &=& g_{n-i}g_i'g_{n-i}^{-1}. \nonumber
\end{eqnarray} 
Similarly, we compute the action of $\gamma_2$ on the fibre and from the below figure 

\begin{figure}[H]
\begin{center}
\raggedleft
\begin{minipage}{15cm}
  \includegraphics[width=0.7\linewidth]{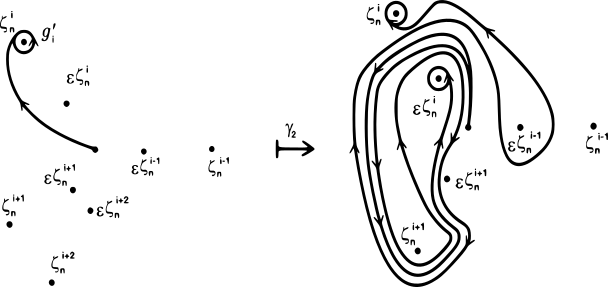}
  \end{minipage}
    \caption{Action of $\gamma_2$}
      \label{Base space.}
\end{center}
\end{figure}
we note that,
\begin{eqnarray}
\gamma_2 \cdot g_{n-i}= g_{n-1}''^{-1}g_{n-i}g_{n-1}'' &=& g_{n-i}g_{n-i-1}g_{i+1}'g_{n-i-1}^{-1}g_{n-i}g_{n-i-1}g_{i+1}'^{-1}g_{n-i-1}^{-1}g_{n-i}^{-1}\\ \nonumber
\gamma_2 \cdot g_i'= g_{n-1}''^{-1}g_{i}'g_{n-1}'' &=& g_{n-i}g_{n-i-1}g_{i+1}'g_{n-i-1}^{-1}g_{n-i}^{-1}g_{n-i-1}g_{i+1}'^{-1}g_{n-i-1}^{-1}g_{n-i}^{-1}g_{n-i+1} 
g_{n-i}g_i'\cdot \\ \nonumber
&& g_{n-i}^{-1}g_{n-i+1}^{-1}g_{n-i}g_{n-i-1}g_{i+1}'g_{n-i-1}^{-1}g_{n-i}g_{n-i-1}g_{i+1}'^{-1}g_{n-i-1}^{-1}g_{n-i}^{-1},\nonumber
\end{eqnarray} 
for each $0 \leq i \leq n-1$.
Action of $\gamma_3$ can be computed in similar fashion by looking at the monodromy action of $\gamma_3$ on the fibers of $Pr$, as in the below figures.
\begin{figure}[H]
\begin{center}
\raggedleft
\begin{minipage}{15cm}
  \includegraphics[width=0.8\linewidth]{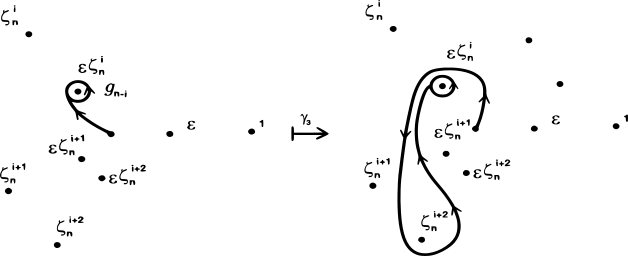}
  \end{minipage}
    \caption{Action of $\gamma_3$ on $g_i$}
      \label{action of gamma-3.}
\end{center}
\end{figure}

\begin{figure}[H]
\begin{center}
\raggedleft
\begin{minipage}{15cm}
  \includegraphics[width=0.8\linewidth]{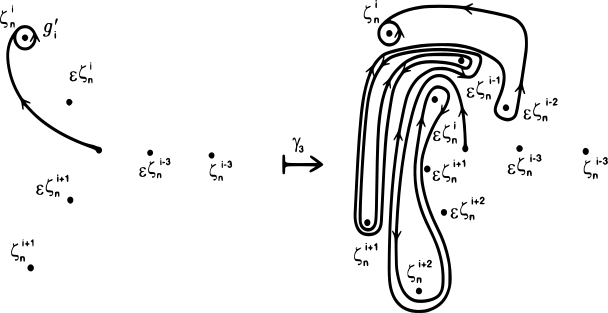}
  \end{minipage}
    \caption{Action of $\gamma_3$ on $g_i'$}
      \label{action of gamma-3.}
\end{center}
\end{figure}
We therefore get 
\begin{eqnarray}
\gamma_3 . g_i' = g_{n-2}''^{-1}g_{i}'g_{n-2}'' &=& g_{n-i}g_{n-i-1}g_{n-i-2}g_{i+2}'g_{n-i-2}^{-1}g_{n-i-1}^{-1}g_{n-i}^{-1}g_{n-i-1}g_{n-i-2}g_{i+2}'^{-1}g_{n-i-2}^{-1}\cdot \\ \nonumber
&& g_{n-i-1}^{-1}g_{n-i}^{-1} g_{n-i+1} g_{n-i}g_{n-i-1}g_{i+1}'g_{n-i-1}^{-1}g_{n-i}^{-1}g_{n-i+1}^{-1}g_{n-i}g_{n-i-1}g_{i+1}'^{-1}\cdot \\ \nonumber
&& g_{n-i-1}^{-1}g_{n-i}^{-1}g_{n-i+1}^{-1}g_{n-i+2}g_{n-i+1}g_{n-i}g_i'g_{n-i}^{-1}g_{n-i+1}^{-1}g_{n-i+2}^{-1}g_{n-i+1}g_{n-i}\cdot \\ \nonumber
&& g_{n-i-1}g_{i+1}'g_{n-i-1}^{-1}g_{n-i}^{-1}g_{n-i+1}g_{n-i}g_{n-i-1}g_{i+1}'^{-1}g_{n-i-1}^{-1}g_{n-i}^{-1}g_{n-i+1}^{-1}g_{n-i}\cdot \\ \nonumber
&& g_{n-i-1}g_{n-i-2}g_{i+2}'g_{n-i-2}^{-1}g_{n-i-1}^{-1}g_{n-i}g_{n-i-1}g_{n-i-2}g_{i+2}'^{-1}g_{n-i-2}^{-1}g_{n-i-1}^{-1}g_{n-i}^{-1}\\ \nonumber
\gamma_3 . g_{n-i} = g_{n-2}''^{-1}g_{n-i}g_{n-2}'' &=& g_{n-i}g_{n-i-1}g_{n-i-2}g_{i+2}'g_{n-i-2}^{-1}g_{n-i-1}^{-1}g_{n-i}g_{n-i-1}g_{n-i-2}g_{i+2}'^{-1}g_{n-i-2}^{-1}\\ \nonumber
&& g_{n-i-1}^{-1}g_{n-i}^{-1}. \nonumber   
\end{eqnarray}
On the basis of the above discussion, we can write down the equation for the action of $\gamma_k$ more generally as follows:
\begin{eqnarray}\label{action of general gamma-k}
\gamma_k . g_{n-i} &=& g_{n-k+1}''^{-1} g_{n-i} g_{n-k+1}'' \\ \nonumber
&=& g_{n-i}g_{n-i-1}...g_{n-i-(k-1)}g_{i+(k-1)}'g_{n-i-(k-1)}^{-1}...g_{n-i-1}^{-1}g_{n-i}\cdot \\ \nonumber
&& g_{n-i-1}...g_{n-i-(k-1)}g_{i+(k-1)}'^{-1}g_{n-i-(k-1)}^{-1}...g_{n-i-1}^{-1}g_{n-i}^{-1}\\ \nonumber
\gamma_k . g_i' &=& g_{n-k+1}''^{-1} g_{i}' g_{n-k+1}'' \\ \nonumber
&=& \mathscr{G}^k_{n-i}\mathscr{G}^k_{n-i+1}\cdots \mathscr{G}^k_{n-i+(k-2)} \mathscr{H}^k_{n-i+(k-1)} g_i' 
{\mathscr{H}^k_{n-i+(k-1)}}^{-1} {\mathscr{G}^k_{n-i+(k-2)}}^{-1} \cdots {\mathscr{G}^k_{n-i+1}}^{-1} {\mathscr{G}^k_{n-i}}^{-1}
\end{eqnarray}
for $1 \leq k \leq n$ and $0 \leq i \leq n-1$. Here we define \textit{k-conjugate} $\mathscr{G}^k_{n-s}$ for $s=i,i-1,...,i-(k-2)$ 
and $\mathscr{H}^k_{n-i+(k-1)}$ as follows:
\begin{eqnarray}\label{def of k-conjugate}
    \mathscr{G}^k_{n-s} &:=& g_{n-s}g_{n-s-1}\cdots g_{n-s-(k-1)}g_{s+(k-1)}'g_{n-s-(k-1)}^{-1}\cdots g_{n-s}^{-1} \cdot \\ \nonumber
   && g_{n-s-1}\cdots g_{n-s-(k-1)}g_{s+(k-1)}'^{-1}g_{n-s-(k-1)}^{-1}\cdots g_{n-s}^{-1},
\end{eqnarray}
and 
\begin{eqnarray}\label{def of half k-conjugate}
    \mathscr{H}^k_{n-i+(k-1)} := g_{n-i+(k-1)}g_{n-i+(k-2)}\cdots g_{n-i}. 
\end{eqnarray}
\\ 
After substituting the definition of $\mathfrak{g}$, and using \eqref{cyclic ration}, this completes the proof of the Lemma.
\end{proof} 

\begin{cor}\label{cor:fund_group_CP2_minus_C_minus_x}
	The fundamental group $\pi_1(\mathbb{CP}^2 - (\mathcal{C}\cup V(x)), \bar{\epsilon})$ is generated by the $3n$ generators $\{g_0,\dots, g_{n-1}, g'_0, \dots, g'_{n-1}, g''_0, \dots, g''_{n-1}\}$ subject to the relations
	\begin{eqnarray*}
		\left[\mathfrak{g}, g_i\right] &=& 1  \\ 
		\left[ g'_i, g_{n-i} \right] &=& 1 \\
		g_{n-k+1}''^{-1} g_{n-i} g_{n-k+1}'' &=& g_{n-i}g_{n-i-1}...g_{n-i-(k-1)}g_{i+(k-1)}'g_{n-i-(k-1)}^{-1}...g_{n-i-1}^{-1}g_{n-i}\cdot \\ 
		&& g_{n-i-1}...g_{n-i-(k-1)}g_{i+(k-1)}'^{-1}g_{n-i-(k-1)}^{-1}...g_{n-i-1}^{-1}g_{n-i}^{-1}\\ 
		g_{n-k+1}''^{-1} g_{i}' g_{n-k+1}'' &=& \mathscr{G}^k_{n-i}\mathscr{G}^k_{n-i+1}\cdots \mathscr{G}^k_{n-i+(k-2)} \mathscr{H}^k_{n-i+(k-1)} g_i'\cdot \\
		&& {\mathscr{H}^k_{n-i+(k-1)}}^{-1} {\mathscr{G}^k_{n-i+(k-2)}}^{-1} \cdots {\mathscr{G}^k_{n-i+1}}^{-1} {\mathscr{G}^k_{n-i}}^{-1}.
	\end{eqnarray*}
\end{cor}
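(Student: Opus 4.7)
The plan is to apply the Seifert--Van Kampen Theorem to the open cover $\mathbb{CP}^2-(\mathcal{C}\cup V(x))=U_0\cup(\mathbb{CP}^2-(\mathcal{C}\cup V(x)\cup V(y)))$, using Proposition \ref{Prop for R1 relations} for the second piece and (\ref{Fundamental group of U0}) for $U_0$. The intersection $U_0-V(y)$ has fundamental group $\mathbb{Z}\ltimes F_{2n}$ by Lemma \ref{lemma:fund_group_of_U0_minus_y}, generated by $\gamma_0,g_0,\dots,g_{n-1},g_0',\dots,g_{n-1}'$. By Lemma \ref{lemma:gamma_0_image} (the same degeneration argument applies within $U_0$), the inclusion $U_0-V(y)\hookrightarrow U_0$ sends $\gamma_0\mapsto \mathfrak{g}$ while fixing the remaining generators, whereas the inclusion $U_0-V(y)\hookrightarrow \mathbb{CP}^2-(\mathcal{C}\cup V(x)\cup V(y))$ acts as the identity on generators. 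The amalgamated-product pushout therefore enforces the single identification $\gamma_0=\mathfrak{g}$, eliminating $\gamma_0$ from the generating set and leaving $\{g_i,g_i',g_i''\}_{0\le i\le n-1}$ as the $3n$ generators.

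After this substitution, the first two families of relations of Proposition \ref{Prop for R1 relations} simplify: the relation $\gamma_0^{-1}g_i\gamma_0=g_i$ becomes the commutator relation $[\mathfrak{g},g_i]=1$, and the relation $\gamma_0^{-1}g_i'\gamma_0=\mathfrak{g}^{-1}g_{n-i}^{-1}g_i'g_{n-i}\mathfrak{g}$ becomes, after multiplying both sides on the left by $\mathfrak{g}$ and on the right by $\mathfrak{g}^{-1}$, the identity $g_i'=g_{n-i}^{-1}g_i'g_{n-i}$, i.e.\ $[g_i',g_{n-i}]=1$. The two families of $g_{n-k+1}''$-conjugation relations contain no $\gamma_0$ and so carry over verbatim. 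Reading off the result yields precisely the presentation claimed in the Corollary.

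The only point requiring care, which I expect to be the main (mild) obstacle, is to confirm that the cyclic relations (\ref{cyclic ration}) contributed by the presentation (\ref{Fundamental group of U0}) of $\pi_1(U_0)$ introduce no additional constraints beyond those already listed. This follows from the commutator family $\{[\mathfrak{g},g_i]=1\}$: rewriting $g_i\mathfrak{g}=\mathfrak{g}g_i$ with $\mathfrak{g}=g_{n-1}\cdots g_1g_0$ and cancelling appropriately produces the cyclic shift $g_i\cdots g_0g_{n-1}\cdots g_{i+1}=\mathfrak{g}$, and iterating over $i$ recovers every case of (\ref{cyclic ration}). Once this consistency check is in place, the Corollary is immediate from Proposition \ref{Prop for R1 relations} via the Van Kampen identification $\gamma_0=\mathfrak{g}$.
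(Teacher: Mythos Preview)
Your proposal is correct and follows essentially the same approach as the paper: apply Seifert--Van Kampen to the cover $U_0\cup(\mathbb{CP}^2-(\mathcal{C}\cup V(x)\cup V(y)))$, invoke Lemma~\ref{lemma:gamma_0_image} to identify $\gamma_0$ with $\mathfrak{g}$, and read off the presentation of Proposition~\ref{Prop for R1 relations} after this substitution. The paper's proof is in fact more terse than yours---it simply asserts that the identification $\gamma_0=\mathfrak{g}$ yields the result---whereas you spell out how the first two relation families simplify and verify that the cyclic relations~(\ref{cyclic ration}) from $\pi_1(U_0)$ are consequences of $[\mathfrak{g},g_i]=1$ (the same iterative argument the paper used earlier, in reverse).
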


\begin{proof}
	We use Seifert-Van Kampen Theorem on $\pi_1(U_0, \bar{\epsilon})$ and $\pi_1(\mathbb{CP}^2 - (\mathcal{C}\cup V(x) \cup V(y)), \bar{\epsilon})$.
	By Lemma \ref{lemma:gamma_0_image}, this amounts to setting $\gamma_0$ equal to $\mathfrak{g}$ in the relations defining $\pi_1(\mathbb{CP}^2 - (\mathcal{C}\cup V(x) \cup V(y)), \bar{\epsilon})$ as described in Lemma \ref{Prop for R1 relations}, so we get the desired result.
\end{proof}

\begin{rmk}
	It is possible to use Seifert-Van Kampen Theorem in order to compute the fundamental group $\pi_1(\mathbb{CP}^2 - (\mathcal{C}\cup V(x)), \bar{\epsilon})$ without the precise knowledge of the relations in $\pi_1(U_0 - V(y), \bar{\epsilon})$, only using the fact it is generated by the same generators as $\pi_1(U_0, \bar{\epsilon})$ and $\pi_1(\mathbb{CP}^2 - (\mathcal{C}\cup V(x) \cup V(y)), \bar{\epsilon})$.
	None the less, the computations in Section \ref{fundamental group of U0-V(Y)} were used in the proof of Lemma \ref{Prop for R1 relations}, so no extra work was performed here.
\end{rmk}

\subsection{Fundamental group of $U_\infty$}\label{Fundamental group of Uinfinity}
To compute the fundamental group $\pi_1(U_{\infty}, \bar{\epsilon})$ we will use another symmetry in the definition of $\mathcal{C}$, namely the fact it is symmetric to permutation of the variables.
Recall that, 
\[
U_\infty = \{[x:y:z] \in \CP-\C \ | \ y\neq 0, \ \left|\frac{x}{y}\right| < 2\e \}.
\]
We choose the basepoint in this case to be $\epsilon := [\e: 1:0]$.
We can then immediately see that the map 
\begin{align*}
	\phi: U_0 & \to U_{\infty} \\
	[x:y:z] & \mapsto [y:x:z]
\end{align*}
is an isomorphism that sends the basepoint $\bar{\epsilon}$ to the basepoint $\epsilon$.
Denoting 
\begin{eqnarray*}
	g_{\infty, n-i}' := \phi(g_i) \\
	g_{\infty, n-j} := \phi(g_j'),
\end{eqnarray*}
we can thus see that $\pi_1(U_{\infty}, \epsilon)$ is generated by the $2n$ generators $\{g_{\infty, 0}, \dots, g_{\infty, n-1}, g'_{\infty, 0}, \dots, g'_{\infty, n-1}\}$ subject to the relation (which is the push-forward of \eqref{cyclic ration})
\begin{equation}\label{eq:rel_U_infty}
	g'_{\infty, j}...g'_{\infty, n-1}g'_{\infty, 0}g'_{\infty, 1}...g'_{\infty, j-1} = g'_{\infty, 0}g'_{\infty, 1}...g'_{\infty, n-1}.
\end{equation}
Similarly, $\pi_1(U_{\infty}-V(x), \epsilon)$ is generated by the $2n+1$ generators $\{\gamma_{\infty}, g_{\infty, 0}, \dots, g_{\infty, n-1}, g'_{\infty, 0}, \dots, g'_{\infty, n-1}\}$ subject to 

\begin{eqnarray}\label{eq:rel_U_infty_minus_x}
	\gamma_{\infty}^{-1} g'_{\infty, i} \gamma_{\infty} &=& g'_{\infty, i} \\ \nonumber
	\gamma_{\infty}^{-1} g_{\infty, i} \gamma_{\infty} &=& \mathfrak{g}_{\infty}^{-1}g_{\infty, n-i}'^{-1}g_{\infty, i}g_{\infty, n-i}'\mathfrak{g}_{\infty}, \nonumber
\end{eqnarray}
where we have denoted $\mathfrak{g}_{\infty} = g'_{\infty, 0}\dots g'_{\infty, n-1}$ and $\gamma_{\infty}:=\phi(\gamma_0)$.

\section{Fundamental group of $\mathbb{CP}^2-\mathcal{C}$}\label{final computation}

Now to finally compute the fundamental group $\pi_1(\mathbb{CP}^2-\mathcal{C})$, we use the Seifert-Van Kampen Theorem with open covers as $\mathbb{CP}^2-(\mathcal{C}\cup V(x))$ and $U_\infty$. To do this, 
we first need to translate the basepoint in $\pi_1(U_\infty,\epsilon)$ and in $\pi_1(U_\infty-V(x),\epsilon)$ to obtain $\pi_1(U_\infty,\overline{\epsilon})$ and $\pi_1(U_\infty-V(x),\overline{\epsilon})$ respectively. This we do by defining a path $\alpha$ between the two basepoints $\overline{\epsilon} = [1: \e : 0]$ and $\epsilon = [1: 1/\e : 0]$ as follows.

\[
\alpha(t) := \begin{cases}
                  [1: \e + (1-\delta-\e)t : 0], \quad 0 \leq t \leq 1\\
                  [1: 1 + \delta\exp(\pi it) : 0],  \quad 1 \leq t \leq 2 \\
                  [1: 1 + \delta + (\e^{-1} - 1 - \delta)(t-2) : 0], \quad 2 \leq t \leq 3.
                  \end{cases}
\]

Note that the loops $\alpha*g_{\infty,i}*\alpha^{-1}$, $\alpha*g_{\infty,i}'*\alpha^{-1}$ and 
$\alpha*g_{\infty,i}''*\alpha^{-1}$ are all based at the basepoint $\overline{\epsilon}$. Here $``*"$ represents concatenation.

\begin{figure}[H]
\begin{center}
\raggedleft
\begin{minipage}{15cm}
  \includegraphics[width=0.7\linewidth]{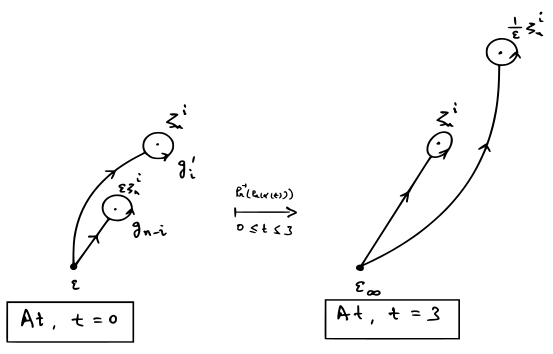}
  \end{minipage}
    \caption{Change of base}
      \label{Change-of-base.}
\end{center}
\end{figure}
Figure \ref{Change-of-base.} shows the monodromy action on the fibre of $Pr$ while traversing the path defined by $\alpha(t)$. 
Also from the defining equation of $g_{\infty,i} = \phi(g_{n-i}')$ it is clear that $g_{\infty, i}$ is a loop around the point 
$(\frac{1}{\varepsilon})\z_n^i$ (in $z/x$ coordinate) and based at $\frac{1}{\varepsilon}$. Similarly from the defining equation of $g_{\infty, i}'$ it is clear that $g_{\infty, i}'$ is a loop around $\z_n^i$ (in $z/x$ coordinate). Therefore from Figure \ref{Change-of-base.} it follows that 
\[
\alpha*g_{\infty, i}*\alpha^{-1} \quad \text{and} \quad \alpha*g_{\infty, n-i}'*\alpha^{-1}
\]
are as shown (on the right side) in Figure \ref{After base Changing}.

\begin{figure}[H]
\begin{center}
\raggedleft
\begin{minipage}{15cm}
  \includegraphics[width=0.7\linewidth]{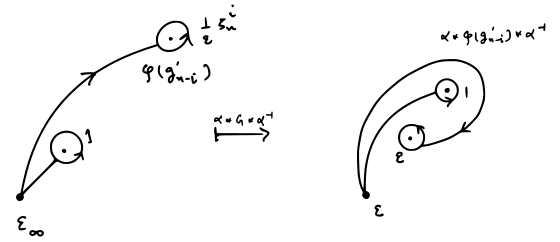}
  \end{minipage}
    \caption{After base changing}
      \label{After base Changing}
\end{center}
\end{figure} 
Now, it is not hard to see that 
\begin{eqnarray*}
\alpha*g_{\infty, i}*\alpha^{-1} &=& g_{n-i}'^{-1}g_{i}g_{n-i}', \;  \text{and}\\
\alpha*g_{\infty, i}'*\alpha^{-1} &=& g_i'.
\end{eqnarray*}
We translate the relation we have obtained for $\pi_1(U_\infty, \epsilon)$ and $\pi_1(U_{\infty} - V(x), \epsilon)$, by $\alpha$, to get
\begin{equation}\label{eq:rel_U_infty_translated}
g'_{j}...g'_{n-1}g'_{0}g'_{1}...g'_{j-1} = g'_{0}g'_{1}...g'_{n-1}
\end{equation}
for all $1\le j\le n-1$.

Now applying the Seifert-Van Kampen Theorem with open covers $\mathbb{CP}^2-(\mathcal{C}\cup V(x))$ and $U_\infty$, we 
obtain the following:
\begin{prop}\label{prop:main}
$\pi_1(\mathbb{CP}^2-\mathcal{C}, \overline{\epsilon}) = \langle g_i ,g_i',g_i'' \ | \  \text{such that the following relations hold} \rangle:
$
\begin{eqnarray*}
[g_{n-i}, g_i'] &=& e \\ \nonumber
[g_i, g_{n-1}g_{n-2} \ldots  g_0] & = & e\\ \nonumber
[g_i', g_0'g_{1}' \ldots  g_{n-1}'] &=& e \\ \nonumber
\gamma_k . g_{n-i} = g_{n-k+1}''^{-1} g_{n-i} g_{n-k+1}'' &=& g_{n-i}g_{n-i-1}...g_{n-i-(k-1)}g_{i+(k-1)}'g_{n-i-(k-1)}^{-1}...g_{n-i-1}^{-1}g_{n-i}\cdot \\ 
&& g_{n-i-1}...g_{n-i-(k-1)}g_{i+(k-1)}'^{-1}g_{n-i-(k-1)}^{-1}...g_{n-i-1}^{-1}g_{n-i}^{-1}\\ 
\gamma_k . g_i' = g_{n-k+1}''^{-1} g_{i}' g_{n-k+1}'' &=& \mathscr{G}^k_{n-i}\mathscr{G}^k_{n-i+1}\cdots \mathscr{G}^k_{n-i+(k-2)} \mathscr{H}^k_{n-i+(k-1)} g_i'\cdot \\ 
&& {\mathscr{H}^k_{n-i+(k-1)}}^{-1} {\mathscr{G}^k_{n-i+(k-2)}}^{-1} \cdots {\mathscr{G}^k_{n-i+1}}^{-1} {\mathscr{G}^k_{n-i}}^{-1}.
\end{eqnarray*}
for $1 \leq k \leq n$ and $0 \leq i \leq n-1$, where the definition of $\mathscr{G}^k_{n-s}$ and $\mathscr{H}^k_{n-i+(k-1)}$ are given 
in (\ref{def of k-conjugate}) and (\ref{def of half k-conjugate}) respectively. 
\end{prop}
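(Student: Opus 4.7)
The approach is to apply the Seifert--Van Kampen theorem to the open cover of $\mathbb{CP}^2-\mathcal{C}$ by $A:=\mathbb{CP}^2 - (\mathcal{C}\cup V(x))$ and $B:=U_\infty$, whose intersection is $U_\infty - V(x)$. Presentations of $\pi_1(A, \overline{\epsilon})$, $\pi_1(B, \epsilon)$ and $\pi_1(A\cap B, \epsilon)$ are in hand from Corollary \ref{cor:fund_group_CP2_minus_C_minus_x} and Section \ref{Fundamental group of Uinfinity}, and the basepoint change from $\epsilon$ to $\overline{\epsilon}$ along $\alpha$ has already been carried out in the excerpt: using Figures \ref{Change-of-base.} and \ref{After base Changing}, the translated $B$-generators satisfy, as loops in $A$,
\[
\alpha * g_{\infty, i} * \alpha^{-1} \;=\; g_{n-i}'^{-1}\, g_i\, g_{n-i}', \qquad \alpha * g_{\infty, i}' * \alpha^{-1} \;=\; g_i'.
\]

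First, I would use these equalities to eliminate every translated $B$-generator in favor of a word in $\{g_i, g_i', g_i''\}$, so that the amalgamated presentation is generated by exactly the $3n$ symbols appearing in the statement of Proposition \ref{prop:main}. The relations of $\pi_1(A, \overline{\epsilon})$ from Corollary \ref{cor:fund_group_CP2_minus_C_minus_x} transfer verbatim, giving $[g_{n-i}, g_i'] = e$, $[g_i, \mathfrak{g}] = e$ and the two long $g''$-conjugation relations.

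Next, I would push the single relation of $\pi_1(B, \overline{\epsilon})$, namely the translated cyclic identity \eqref{eq:rel_U_infty_translated}, through the substitution $\alpha g_{\infty, i}' \alpha^{-1} \mapsto g_i'$. This immediately yields
\[
g_j'\cdots g_{n-1}' g_0' \cdots g_{j-1}' \;=\; g_0' g_1' \cdots g_{n-1}', \qquad 1\le j\le n-1,
\]
which, by the same inductive cyclic manipulation that converted \eqref{cyclic ration} into the commutator family $[g_i, \mathfrak{g}] = e$, is equivalent to $[g_i', g_0'g_1'\cdots g_{n-1}'] = e$. This is the only new relation, and together with those inherited from $\pi_1(A, \overline{\epsilon})$ it matches the presentation claimed in the proposition.

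The main obstacle is to certify that no additional relation sneaks in from the remaining generator $\gamma_\infty$ of $\pi_1(A\cap B, \overline{\epsilon})$, which is not among the generators of either $\pi_1(A, \overline{\epsilon})$ or $\pi_1(B, \overline{\epsilon})$ and therefore needs a separate identification. I would handle this by proving an $\infty$-analogue of Lemma \ref{lemma:gamma_0_image}: either by transporting its line-deformation argument under the coordinate-swap symmetry $\phi$, or by redoing the degeneration directly with $V(x)$ and $V(y)$ exchanged. The outcome is that $\alpha \gamma_\infty \alpha^{-1}$ is represented by $g_0' g_1' \cdots g_{n-1}'$ both in $\pi_1(A, \overline{\epsilon})$ and in $\pi_1(B, \overline{\epsilon})$, so the identification coming from $\gamma_\infty$ reduces to a tautology. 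Once this reconciliation is in place, assembling all the pieces above produces exactly the presentation asserted in Proposition \ref{prop:main}.
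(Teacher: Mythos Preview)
Your proposal is correct and follows essentially the same route as the paper: apply Seifert--Van Kampen to the cover $\mathbb{CP}^2-(\mathcal{C}\cup V(x))$ and $U_\infty$, use the $\alpha$-translation to rewrite the $U_\infty$-generators in terms of the $g_i, g_i'$, and read off the presentation as the relations of Corollary \ref{cor:fund_group_CP2_minus_C_minus_x} together with the translated cyclic identity \eqref{eq:rel_U_infty_translated}. In fact you are more careful than the paper on one point: the paper simply asserts that the images of all generators of $\pi_1(U_\infty - V(x))$ in the two pieces are ``evident''ly equal, whereas you single out $\gamma_\infty$ and propose proving an $\infty$-analogue of Lemma \ref{lemma:gamma_0_image} to identify $\alpha\gamma_\infty\alpha^{-1}$ with $g_0'g_1'\cdots g_{n-1}'$ on both sides --- a detail the paper leaves implicit.
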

\begin{proof}
We saw in the beginning of Section \ref{Fundamental group of Uinfinity} that $\pi_1(U_{\infty} - V(X))$ is generated by 
$$g_{\infty, 0}, \dots, g_{\infty, n-1}, g'_{\infty, 0}, \dots, g'_{\infty, n-1}, \gamma_{\infty}$$
and it is evident that the images of each of those generators in $\pi_1(U_{\infty}, \bar{\epsilon})$ and in $\pi_1(\mathbb{CP}^2 - (\mathcal{C}\cup V(x)), \bar{\epsilon})$ are equal.
Thus by Seifert-Van Kampen Theorem, $\pi_1(\mathbb{CP}^2 - \mathcal{C}, \bar{\epsilon})$ is generated by $g_0,\dots, g_{n-1}, g'_0, \dots, g'_{n-1}, g''_0, \dots, g''_{n-1}$, subject to the relations outlined in Corollary \ref{cor:fund_group_CP2_minus_C_minus_x} and to the relations \eqref{eq:rel_U_infty_translated}, which is the statement of the Proposition.
	
\end{proof}
\begin{rmk}
    We now focus on the last two type of equations in the statement of the proposition, for small values of $k$.
    As $k$ increases the result becomes more complicated, we present here the result for $0\le k\le 3$.
    For example, for $\gamma_k . g_{n-i}$
\begin{eqnarray}
\text{when} \quad k=0  &\text{gives,}&  [g_{n-i}, g_1''g_{i-1}'] = e, \\
\text{when} \quad k=1  &\text{gives,}&  [g_{i}, g_0''] = e, \\
\text{when} \quad k=2  &\text{gives,}&  [g_{n-i}, g_{n-1}''g_{n-i}g_{i+1}'] = e\\
\text{when} \quad k=3  &\text{gives,}&  [g_{n-i}, g_{n-2}''g_{n-i}g_{n-i-1}g_{i+2}'g_{n-i-1}^{-1}] = e.
\end{eqnarray}
When $k=0$, (\ref{def of k-conjugate}) and (\ref{def of half k-conjugate}) translates to 
\[
\gamma_0g_i^{-1}\gamma_0^{-1} = \mathscr{G}^0_{i} \quad \text{and} \quad \mathscr{H}^0_{i} = \gamma_0
\]
respectively. Substituting these in the last equation of the Proposition \ref{prop:main}, gives 
\begin{eqnarray} \nonumber
    g_1''^{-1}g_i'g_1'' = g_{n-i-1}g_i'g_{n-i-1}^{-1} \\
\Rightarrow g_i'g_1''g_{n-i-1} = g_{n-i-1}g_i'g_1'' = g_1''g_{n-i-1}g_i'.
\end{eqnarray}
\end{rmk}

\begin{Def}\label{defn of G}
    Let $G$ denote the group generated by $g_i$ and $g_j'$ for $0 \leq i,j \leq n-1$ subject to the relations obtained above. That is 
    \begin{eqnarray*}
    G := \{g_i, g_j' \; | 0\le i,j \le n-1, [g_0, g_0'] = e, [g_i,\ g_0g_{n-1}g_{n-2}...g_1] = e, [g_{n-i}, g_i'] = e, 
    [g_i', g_0'g_{n-1}'\cdots g_1'] = e  \}.
    \end{eqnarray*}
\end{Def}
We can now prove the main theorem of the paper (Theorem \ref{final theorem}).
\begin{proof}[Proof of Theorem \ref{final theorem}]
    Let $F_n := \mathbb{Z}\langle g_j'' \rangle$ denotes the free group with generators $g_j''$ for $0 \leq j \leq n-1$. It is clear 
    that $F_n$ and $G$ are subgroups of $\pi_1(\mathbb{CP}^2 - \mathcal{C})$ and in-fact $G$ is a normal subgroup in view of \ref{Prop for R1 relations} with the following short-exact sequence 
    \[
    1 \longrightarrow G \longrightarrow \pi_1(\mathbb{CP}^2 - \mathcal{C}) \longrightarrow F_n \longrightarrow 1.
    \]
    Therefore it follows that 
    \[
    \pi_1(\mathbb{CP}^2 - \mathcal{C}) = G \rtimes F_n.
    \]
\end{proof}

\bibliographystyle{plain}

\begin{thebibliography}{100}
\bibitem{Quadric-L-A} Amram, M., Teicher, M., Uludag, A.M., {\it Fundamental groups of some quadric-line arrangements},
Topology and its Applications, Volume 130, Issue 2, 2003, Pages 159-173, ISSN 0166-8641. 
\bibitem{Art1} Artal-Bartolo, E., {\it On Zariski pairs (Sur les couples de Zariski)}, J. Algebraic Geom. 3, {223--247}, 1994.

\bibitem{Art2} Artal-Bartolo, E., Carmona Ruber, J., {\it Zariski pairs, fundamental groups and Alexander polynomials},  J. Math. Soc. Japan 50(3), 521--543, 1998.

\bibitem{Art4} Artal-Bartolo, E., Carmona Ruber, J., Cogolludo-Agustin, J., {\it Braid monodromy and topology of plan curves}, Duke Math. J. 118(2), 261--278, 2003. 


\bibitem{Art3} Artal-Bartolo, E., Cogolludo-Agustin, J., Tokunaga, H., {\it A survey on Zariski pairs}, In Algebraic geometry in East Asia-Hanoi 2005. Proceedings of the 2nd international conference on algebraic geometry in East Asia, Hanoi, Vietnam, October 10-14, 2005, pages 1--100. Tokyo: Mathematical Society of Japan, 2008.


\bibitem{BNC-2} Bauer, T., Dirocco, S., Harbourne, B., Huizenga, J., Lundman, A., Pokora, P., and Szemberg, T.,  
{\it Bounded negativity and arrangements of lines}, Int. Math. Res. Not. IMRN 19, 9456--9471, 2015.
\bibitem{BNC-1} Bauer, T., Harbourne, B., Knutsen, A. L., K\'uronya, A., M\'uller-stach, S.,  Roulleau, S., and
Szemberg, T., {\it Negative curves on algebraic surfaces}, Duke Math. J. 162(10), 1877--1894, 2013.
\bibitem{cog1} Cogolludo-Agustin, J., {\it Braid Monodromy of Algebraic Curves}, Annales mathématiques Blaise Pascal 18, 141--209, 2011. 
\bibitem{Cohen-Sucio} Cohen, D.,  Suciu, A.,  {\it The braid monodromy of plane algebraic curves and hyperplane arrangements}, Comment. Math. Helvetici 72, 285--315, 1997.
\bibitem{DST} Dumnicki, M., Szemberg, T., and Tutaj-Gasi\'nska, H.,  {\it Counterexamples to the $I^{(3)} \subset I^2$ containment},  J. Algebra 393, 24--29, 2013.
\bibitem{Falk-Rand} Falk, M.,  Randell, R.,  {\it Pure braid groups and products of free groups}, In: Braids (J. Birman, A. Libgober, eds.), Contemp. Math., vol . 78, Amer. Math. Soc., Providence, RI, pp. 217--228, 1988.
\bibitem{Fan1} Fan, K.M., {\it Position of singularities and fundamental group of the complement of a union of lines}, Proc. Amer. Math. Soc. 124(11), 3299--3303, 1996.
\bibitem{Fan2}Fan, K.M., {\it Direct product of free groups as the fundamental group of the complement of a union of lines}, Michigan Math. J. 44(2), 283--291, 1997.
\bibitem{Garber-Teicher} Garber, D., Teicher, M., {\it The fundamental group’s structure of the complement of some configurations of real line arrangements}, In: Peternell T, Schreyer F (ed.) Complex Analysis and Algebraic Geometry: A Volume in Memory of Michael Schneider. Berlin, Boston: De Gruyter;  p.173--224, 2000.
\bibitem{AH} Hatcher, A. {\it Algebraic topology}. Cambridge: Cambridge University Press, 2002.
\bibitem{Hir} Hirzebruch, F., {\it Arrangements of lines and algebraic surfaces}, J. Math. Kyoto Univ. 38, no. 1, 101--121, 1998.
\bibitem{Orlik-Ter} Orlik, P.,  Terao, H., {\it Arrangements of hyperplanes}, Grundlehren Math. Wiss., vol. 300, Springer-Verlag, Berlin, 1992.
\bibitem{Moish-Teicher-I} Moishezon, B., Teicher, M., {\it Braid group technique in complex geometry I, Line arrangements in $\mathbb{CP}^2$}, Contemp. Math. 78, 42--555, 1988.
\bibitem{Moish-Teicher-II} Moishezon, B., Teicher, M., {\it Braid group technique in complex geometry II, From arrangements of lines and
conics to cuspidal curves}, in: Algebraic Geometry, in: Lecture Notes in Math., Vol. 1479, Springer-Verlag, Berlin, 1990.
\bibitem{OS} Oka, M., Sakamoto, K., {\it Product theorem of the fundamental group of a reducible curve}, J. Math. Soc. Japan 30(4), 599--602, 1978.
\bibitem{Pokora} Pokora, P., {\it Seshadri constants and special point configurations in the projective plane}, Rocky Mountain J. Math. 49(3), 963--978, 2019.
\bibitem{Randell} Randell, R., {\it The fundamental group of the complement of a union of complex hyperplanes}, Invent. Math. 69, 103--108 (1982). Correction, Invent. Math. 80, 467--468, 1985.
\bibitem{Sa} Salvetti, M., {\it Topology of the complement of real hyperplanes in $\mathbb{C}^N$} , Invent. Math. 88, 603--618, 1987.
\bibitem{Urzua} Urzua, G. A., {\it Arrangements of curves and algebraic surfaces}, ProQuest LLC, Ann Arbor, MI, 2008. Thesis(PhD)-Univ. of Michigan. 
\bibitem{vk1} Van Kampen, E. R.,  {\it On the fundamental group of an algebraic curve}, Amer. J. Math. 55, 255--260, 1933.
\bibitem{vk2} Van Kampen, E. R.,  {\it On the connection between the fundamental groups of some related spaces}, Amer. J. Math. 55, no. 1, 261--267, 1933.
\end{thebibliography}

\end{document}